\newtheorem{thm}{Theorem}[section]
\newtheorem*{thmA}{Theorem A}
\newtheorem*{thmB}{Theorem B}
\newtheorem*{thmC}{Theorem C}
\newtheorem{cor}{Corollary}[section]
\newtheorem{lem}{Lemma}[section]
\theoremstyle{definition}
\newtheorem*{ack}{Acknowledgments}
\theoremstyle{remark}
\newtheorem{rem}{Remark}[section]
\numberwithin{equation}{section}
\numberwithin{figure}{section}
\renewcommand{\(}{\left(}
\renewcommand{\)}{\right)}
\newcommand{\mrm}{\mathrm}
\newcommand{\Vol}{\mrm{Vol}}
\newcommand{\divv}{\mrm{div}}
\newcommand{\abs}[1]{\lvert#1\rvert}
\newcommand{\metric}[2]{\ensuremath{\langle #1, #2\rangle}}  %% command for the metric  < , >
\begin{document}
\title{A class of weighted isoperimetric inequalities in hyperbolic space}

\author{Haizhong Li}
\address{Department of Mathematical Sciences, Tsinghua University, Beijing 100084, P.R. China}
\email{\href{mailto:lihz@tsinghua.edu.cn}{lihz@tsinghua.edu.cn}}

\author{Botong Xu}
\address{Department of Mathematical Sciences, Tsinghua University, Beijing 100084, P.R. China}
\email{\href{mailto:xbt17@mails.tsinghua.edu.cn}{xbt17@mails.tsinghua.edu.cn}}

\keywords{Warped product, hyperbolic space, isoperimetric inequality, weighted volume}
\subjclass[2010]{Primary 52A40; Secondary 53C24}
\maketitle	
	\begin{abstract}
		In this paper, we prove a class of weighted isoperimetric inequalities for bounded domains in hyperbolic space by using the isoperimetric inequality with log-convex density in Euclidean space. 
		As a consequence, we remove the horo-convex assumption of domains in a weighted isoperimetric inequality proved by Scheuer-Xia. 
		Furthermore, we prove weighted isoperimetric inequalities for star-shaped domains in warped product manifolds.
		Particularly, we obtain a weighted isoperimetric inequality for star-shaped hypersurfaces lying outside a certain radial coordinate slice in the anti-de Sitter-Schwarzschild manifold.  
	\end{abstract}
	
	\maketitle
	\section{Introduction}\label{Sec-1}
	Research on weighted isoperimetric  inequalities for bounded domains in manifolds is popular in recent years.
	In Euclidean space $\mathbb{R}^{n+1}$, one of the most important results is the isoperimetric inequality with log-convex density $\phi(\abs{x})$ proved by Chambers \cite{Cha19}. A positive, $C^2$ function $\phi$ is called log-convex if $\left( \log \phi\right)'' \geq 0$. The log-convex assumption is reasonable in the sense that balls centered at the origin are stable under volume-preserving variations, see \cite{RCBM08} . For weighted isoperimetric inequalities in Euclidean space, we refer to Morgan's book \cite{Mor16}. 
	
	For horo-convex
	%%%%%%%%%%%%%%%%%%%%%%%
	\footnote{A domain in the hyperbolic space is called horo-convex if all the principal curvatures on its boundary are bounded below by $1$.}
	%%%%%%%%%%%%%%%%%%%%%%
	domains in hyperbolic space $\mathbb{H}^{n+1}$, Scheuer-Xia \cite{SX19} proved the following weighted isoperimetric inequality. In the context, we let $n \geq 1$ be an integer and use $\omega_n$ to denote the area of the unit sphere $\mathbb{S}^n$.
	\begin{thmA}[\cite{SX19}]
		Let $\Omega$ be a smooth horo-convex bounded domain in $\mathbb{H}^{n+1}$ with the origin lying inside $\Omega$. Then  
		\begin{equation*}
		\(\int_{\partial \Omega} \cosh r d\mu\)^2 \geq    \( (n+1) \int_{\Omega} \cosh r dv \)^2 + \omega_n^{\frac{2}{n+1}}  \(  (n+1)\int_{\Omega} \cosh r dv \)^{\frac{2n}{n+1}}.
		\end{equation*}
		Equality holds if and only if $\Omega$ is a geodesic ball centered at the origin.
	\end{thmA}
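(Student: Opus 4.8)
The plan is to straighten the weight out of the hyperbolic volume by a radial diffeomorphism onto Euclidean space, and then to reduce the resulting perimeter functional to the classical Euclidean isoperimetric inequality (the constant-density case of Chambers' log-convex density inequality). I would write $\mathbb{H}^{n+1}$ in geodesic polar coordinates about the origin as $dr^2+\sinh^2 r\,g_{\mathbb{S}^n}$, and consider the diffeomorphism $F\colon\mathbb{H}^{n+1}\to\mathbb{R}^{n+1}$ that fixes the spherical factor and sends the geodesic radius $r$ to the Euclidean radius $s=\sinh r$. Pushing the hyperbolic metric forward gives $h=(1+s^2)^{-1}ds^2+s^2 g_{\mathbb{S}^n}$ on $\mathbb{R}^{n+1}$, with $\cosh r=\sqrt{1+s^2}$. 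A Jacobian computation shows that $F$ is engineered precisely so that the weight disappears from the volume: $d\vol_h=(1+s^2)^{-1/2}d\vol_\delta$, hence for the bounded domain $D:=F(\Omega)$,
\begin{equation*}
\int_\Omega \cosh r\,dv=\int_D \sqrt{1+s^2}\,d\vol_h=\int_D d\vol_\delta=\abs{D},
\end{equation*}
so that $(n+1)\int_\Omega\cosh r\,dv=(n+1)\abs{D}=:W$.

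The delicate step is to rewrite the weighted hyperbolic perimeter as a Euclidean anisotropic integral over $\partial D$. Since $h$ differs from $\delta$ only in the radial direction, $h=\delta-\tfrac{s^2}{1+s^2}(ds)^2$, the ratio of the two induced area elements on a hypersurface with Euclidean unit normal $\nu$ is $d\mu_h/d\mu_\delta=\sqrt{1-\tfrac{s^2}{1+s^2}\abs{\nu_T}^2}$, where $\nu_T$ is the component of $\nu$ tangent to the spheres. Multiplying by the weight $\sqrt{1+s^2}$ and using $s\langle\nu,x/\abs{x}\rangle=\langle x,\nu\rangle$, the weight exactly cancels the radial distortion and leaves
\begin{equation*}
\int_{\partial\Omega}\cosh r\,d\mu=\int_{\partial D}\sqrt{1+\langle x,\nu\rangle^2}\,d\mu_\delta .
\end{equation*}
I expect this computation to be the main obstacle, both to carry out correctly and to recognize that its outcome is precisely the norm of the $\mathbb{R}^2$-valued field $(1,\langle x,\nu\rangle)$ integrated over $\partial D$.

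With this identity in hand the two remaining inequalities are soft. By the triangle (Minkowski) inequality for vector-valued integrals,
\begin{equation*}
\int_{\partial D}\sqrt{1+\langle x,\nu\rangle^2}\,d\mu_\delta=\int_{\partial D}\abs{(1,\langle x,\nu\rangle)}\,d\mu_\delta\ge\left\lvert\left(\abs{\partial D},\int_{\partial D}\langle x,\nu\rangle\,d\mu_\delta\right)\right\rvert=\sqrt{\abs{\partial D}^2+W^2},
\end{equation*}
where $\int_{\partial D}\langle x,\nu\rangle\,d\mu_\delta=\int_D\divv(x)\,d\vol_\delta=(n+1)\abs{D}=W$ by the divergence theorem. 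Squaring and then applying the Euclidean isoperimetric inequality $\abs{\partial D}^2\ge\omega_n^{2/(n+1)}\big((n+1)\abs{D}\big)^{2n/(n+1)}=\omega_n^{2/(n+1)}W^{2n/(n+1)}$ yields
\begin{equation*}
\left(\int_{\partial\Omega}\cosh r\,d\mu\right)^2\ge \abs{\partial D}^2+W^2\ge W^2+\omega_n^{2/(n+1)}W^{2n/(n+1)},
\end{equation*}
which is the asserted inequality. Notably this argument never uses horo-convexity, only that $\Omega$ is a bounded domain containing the origin, which is what I expect to let that hypothesis be dropped later.

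For the equality case I would trace back both inequalities. Equality in Minkowski's inequality forces the field $(1,\langle x,\nu\rangle)$ to point in a fixed direction along $\partial D$, i.e. $\langle x,\nu\rangle$ constant; equality in the isoperimetric inequality forces $D$ to be a round ball. A ball of radius $R$ centered at $p$ has $\langle x,\nu\rangle=R+\langle p,\nu\rangle$ on its boundary, which is constant only when $p=0$, so $D$ must be a ball centered at the origin and $\Omega=F^{-1}(D)$ a geodesic ball centered at the origin; conversely equality for such balls follows from the computation of both sides already used above.
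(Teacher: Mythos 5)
Your proposal is correct and follows essentially the same route as the paper: your diffeomorphism $F$ (sending the geodesic radius $r$ to $s=\sinh r$) is exactly the paper's projection $\pi$ from the hyperboloid model, your volume and weighted-perimeter identities are the paper's Lemmas \ref{lem-rel-weighted vol-Ecul vol} and \ref{lem-rel-weighted area-integral in Eucl} specialized to $\phi\equiv 1$, and your chain of triangle inequality, divergence theorem, and classical isoperimetric inequality is precisely the paper's Jensen inequality \eqref{Jensen ineq}, divergence theorem, and Theorem C with constant density. The only differences are presentational: you use Minkowski's inequality for vector-valued integrals where the paper uses Jensen's inequality (these are equivalent here), and your equality analysis combines both equality conditions while the paper argues via critical points of $\abs{x}^2$ on $\widehat{\Sigma}$ --- both are valid.
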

	Scheuer-Xia \cite{SX19} used a locally constrained inverse curvature flow to prove a weighted Minkowski-type inequality for closed, star-shaped, mean convex hypersurfaces in $\mathbb{H}^{n+1}$.
	Combining that result with a Minkowski type inequality proved by Xia \cite{Xia16}, they proved Theorem A. Later, the first named author with Hu \cite{HL21} proved a sequence of geometric inequalities for weighted curvature integrals by using different locally constrained curvature flows. A special case of \cite[Theorem 1.7]{HL21} is the same as Theorem A
	%%%%%%%%%%%%%%%%%%%%%%%%%%%%%%
	\footnote{The first named author with Hu \cite{HL21} pointed out that Theorem A holds when $\Omega$ is a static convex domain containing the origin in its interior. The static convexity implies the strict convexity, but is weaker than horo-convexity.}
	%%%%%%%%%%%%%%%%%%%%%%%%%%%%%%%
	.
	The main result of this paper is a class of weighted isoperimetric inequalities for bounded domains in $\mathbb{H}^{n+1}$. 
	\begin{thm}\label{thm-general weighted iso ineq}
		Let $\Omega$ be a bounded domain in $\mathbb{H}^{n+1}$ with $C^1$ boundary. Let $\phi(t)$ be a smooth, positive, even function on $\mathbb{R}$ that satisfies $\left(\log \phi\right)'' \geq 0$.  Then
		\begin{equation}\label{main ineq}
		\int_{\partial \Omega} \phi(\sinh r) \cosh r d\mu \geq \psi \left(\int_\Omega \phi(\sinh r) \cosh r dv  \right),
		\end{equation}
		where $\psi$ satisfies
		\begin{equation}\label{def-psi}
		\psi\left( \omega_n \int_0^t \phi(s) s^n ds \right) = \omega_n \phi(t)t^n \sqrt{t^2+1}, \quad \forall \ t \geq 0.
		\end{equation}
		Equality holds in \eqref{main ineq} if and only if $\Omega$ is a geodesic ball centered at the origin.
	\end{thm}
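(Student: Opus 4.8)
The plan is to transport the problem to Euclidean space through the radial diffeomorphism $\Phi\cn\H^{n+1}\to\R^{n+1}$ which, in geodesic polar coordinates, sends the point at distance $r$ from the origin in direction $\theta\in\mathbb{S}^n$ to $x=\sinh r\,\theta$, so that $\abs{x}=\sinh r$ and $\cosh r=\sqrt{1+\abs{x}^2}$, and then to invoke Chambers' Euclidean inequality for the log-convex density $\phi(\abs{x})$. Writing $\tilde\Omega:=\Phi(\Omega)$ and $\tilde\Sigma:=\Phi(\partial\Omega)$, the first point is that the weight is tailored so that $d\rho=\cosh r\,dr$ (with $\rho=\abs{x}$) makes the weighted volume \emph{exactly} preserved,
\[
\int_\Omega \phi(\sinh r)\cosh r\,dv=\int_{\tilde\Omega}\phi(\abs{x})\,dx .
\]
On the boundary, in orthonormal frames $d\Phi$ is diagonal with the radial direction stretched by $\cosh r$ and the spherical directions unchanged; computing the induced area Jacobian gives the identity
\[
\int_{\partial\Omega}\phi(\sinh r)\cosh r\,d\mu=\int_{\tilde\Sigma}\phi(\abs{x})\sqrt{1+\metric{x}{\nu}^2}\,d\mu_{\R^{n+1}},
\]
where $\nu$ is the outward Euclidean unit normal of $\tilde\Sigma$. (On a centered sphere $\metric{x}{\nu}=\abs{x}$, which is exactly where the extra factor $\sqrt{1+\abs{x}^2}=\cosh r$ must come from.)

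The heart of the matter is then the purely Euclidean inequality
\[
\int_{\tilde\Sigma}\phi(\abs{x})\sqrt{1+\metric{x}{\nu}^2}\,d\mu_{\R^{n+1}}\ \geq\ \psi\!\left(\int_{\tilde\Omega}\phi(\abs{x})\,dx\right).
\]
To prove it I would view the integrand as the length of the $\R^2$-valued density $V:=\phi(\abs{x})\bigl(1,\metric{x}{\nu}\bigr)$, set
\[
P:=\int_{\tilde\Sigma}\phi(\abs{x})\,d\mu_{\R^{n+1}},\qquad Q:=\int_{\tilde\Sigma}\phi(\abs{x})\,\metric{x}{\nu}\,d\mu_{\R^{n+1}},
\]
and apply the triangle inequality for vector-valued integrals (Minkowski's integral inequality),
\[
\int_{\tilde\Sigma}\abs{V}\,d\mu_{\R^{n+1}}\ \geq\ \Bigl\lvert\int_{\tilde\Sigma}V\,d\mu_{\R^{n+1}}\Bigr\rvert=\sqrt{P^2+Q^2}.
\]
This splits the task into two scalar lower bounds, each of which is sharp on centered balls, where $\metric{x}{\nu}$ is constant and the triangle inequality becomes an equality.

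For $P$ I would invoke Chambers' inequality for the log-convex density $\phi(\abs{x})$: if $\int_{\tilde\Omega}\phi\,dx=\omega_n\int_0^t\phi(s)s^n\,ds$, then $P\geq\omega_n\phi(t)t^n$. For $Q$ I would apply the divergence theorem to the radial field $\phi(\abs{x})\,x$, giving $Q=\int_{\tilde\Omega}\bigl((n+1)\phi+\abs{x}\phi'\bigr)\,dx$, and then bound this volume integral by a rearrangement (bathtub) argument. Since $\phi$ is even and log-convex, $(\log\phi)'\geq 0$ on $[0,\infty)$, so the ratio $\dfrac{(n+1)\phi+\abs{x}\phi'}{\phi}=(n+1)+\abs{x}(\log\phi)'$ is nondecreasing in $\abs{x}$; hence among all domains of prescribed weighted volume the minimizer of $Q$ is the centered ball, and $Q\geq\omega_n\phi(t)t^{n+1}$ for the same $t$.

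Combining the two estimates finishes the proof, because on the centered ball of radius $t$ one has the Pythagorean identity
\[
\psi\!\left(\omega_n\int_0^t\phi(s)s^n\,ds\right)^2=\bigl(\omega_n\phi(t)t^n\bigr)^2+\bigl(\omega_n\phi(t)t^{n+1}\bigr)^2,
\]
so that $\sqrt{P^2+Q^2}\geq\psi(W)$ with $W=\int_\Omega\phi(\sinh r)\cosh r\,dv$. For rigidity I would trace back the three equalities: equality in the triangle inequality forces $\metric{x}{\nu}$ to be constant on $\tilde\Sigma$, while equality in Chambers' inequality (equivalently, in the rearrangement step) forces $\tilde\Omega$ to be a ball centered at the origin, and pulling back through $\Phi$ identifies $\Omega$ as a geodesic ball centered at the origin. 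The step I expect to require the most care is precisely the recovery of the sharp factor $\cosh r$: a direct comparison of the hyperbolic and Euclidean perimeters loses it and yields only the weaker bound with $\omega_n\phi(t)t^n$ in place of $\psi$, and it is the auxiliary functional $Q$—produced by the vector-valued refinement and controlled by the log-convexity and evenness of $\phi$—that supplies the missing factor.
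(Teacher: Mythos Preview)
Your proposal is correct and follows essentially the same route as the paper: the map $\Phi$ is the paper's projection $\pi$ from the hyperboloid model, your volume and area identities are Lemmas~2.1 and~2.3, your vector-valued triangle inequality is exactly the paper's Jensen inequality for $\chi(t)=\sqrt{t^2+1}$ (both yield $\int\phi\sqrt{1+\widehat u^2}\ge\sqrt{P^2+Q^2}$), Chambers' theorem bounds $P$, and your bathtub argument based on the monotonicity of $t(\log\phi)'(t)$ is the content of Lemma~2.4. The only cosmetic difference is that the paper separates $Q=(n+1)\int\phi+\int\phi'\rho$ and applies the rearrangement only to the second summand, whereas you rearrange $Q$ in one step; the rigidity argument in the paper uses only the Jensen equality case (constancy of $\widehat u$ forces $\abs{x}$ constant via its extrema), which is marginally cleaner than invoking the equality case of Chambers.
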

	Theorem \ref{thm-general weighted iso ineq} can be viewed as a counterpart of Chambers's result \cite{Cha19} in hyperbolic space. Furthermore, by letting  $\phi \equiv 1$ in Theorem \ref{thm-general weighted iso ineq}, we show that the bounded domain $\Omega$ in Theorem A need not be convex and need not contain the origin in its interior.
	\begin{cor}\label{cor-weighted-iso-ineq}
		Let $\Omega$ be a bounded domain in $\mathbb{H}^{n+1}$ with  $C^1$ boundary, then
		\begin{equation*}
		\(\int_{\partial \Omega} \cosh r d\mu\)^2 \geq   \( (n+1) \int_{\Omega} \cosh r dv \)^2 + \omega_n^{\frac{2}{n+1}}  \(  (n+1)\int_{\Omega} \cosh r dv \)^{\frac{2n}{n+1}}.
		\end{equation*}
		Equality holds if and only if $\Omega$ is a geodesic ball centered at the origin.
	\end{cor}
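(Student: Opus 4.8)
The plan is to deduce the corollary directly from Theorem~\ref{thm-general weighted iso ineq} by specializing to the constant weight $\phi \equiv 1$. First I would verify that this weight is admissible: the constant function $1$ is smooth, positive and even, and $(\log \phi)'' = 0 \geq 0$, so the hypotheses of Theorem~\ref{thm-general weighted iso ineq} are satisfied. With this choice the density $\phi(\sinh r)\cosh r$ collapses to $\cosh r$, so inequality \eqref{main ineq} becomes $\int_{\partial\Omega}\cosh r\, d\mu \geq \psi\big(\int_\Omega \cosh r\, dv\big)$, and the entire task reduces to computing the function $\psi$ in closed form for this weight.

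The key step is to solve the defining relation \eqref{def-psi} when $\phi \equiv 1$. Since $\int_0^t s^n\, ds = t^{n+1}/(n+1)$, equation \eqref{def-psi} reads $\psi\big(\omega_n t^{n+1}/(n+1)\big) = \omega_n t^n\sqrt{t^2+1}$ for all $t \geq 0$. Writing $V$ for the argument on the left, I would invert the relation $V = \omega_n t^{n+1}/(n+1)$ to obtain $t^{n+1} = (n+1)V/\omega_n$, and then square the right-hand side to get $\psi(V)^2 = \omega_n^2 t^{2n}(t^2+1) = \omega_n^2 (t^{n+1})^2 + \omega_n^2 (t^{n+1})^{2n/(n+1)}$. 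Substituting $t^{n+1} = (n+1)V/\omega_n$ and using the identity $2 - \tfrac{2n}{n+1} = \tfrac{2}{n+1}$ for the exponent of $\omega_n$ in the second term collapses this to the closed form $\psi(V)^2 = \big((n+1)V\big)^2 + \omega_n^{2/(n+1)}\big((n+1)V\big)^{2n/(n+1)}$.

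To finish, I would combine the two ingredients. Both sides of $\int_{\partial\Omega}\cosh r\, d\mu \geq \psi(V)$ are nonnegative, so squaring preserves the inequality; moreover, since $x \mapsto x^2$ is strictly increasing on $[0,\infty)$, the squaring step is reversible on the equality locus and hence preserves the equality characterization. Substituting the closed form for $\psi(V)^2$ with $V = \int_\Omega \cosh r\, dv$ reproduces exactly the asserted inequality, and the equality clause of Theorem~\ref{thm-general weighted iso ineq} carries over verbatim: equality holds if and only if $\Omega$ is a geodesic ball centered at the origin.

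I do not expect a genuine obstacle, since the corollary is a purely computational consequence of the main theorem. The only points requiring mild care are the exponent bookkeeping hidden in the identity $2-\tfrac{2n}{n+1}=\tfrac{2}{n+1}$, and the remark that the monotone squaring step does not alter the equality case; both are routine.
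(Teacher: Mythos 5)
Your proposal is correct and follows exactly the paper's route: the paper obtains Corollary \ref{cor-weighted-iso-ineq} by setting $\phi \equiv 1$ in Theorem \ref{thm-general weighted iso ineq}, and your explicit inversion of \eqref{def-psi} (giving $\psi(V)^2 = ((n+1)V)^2 + \omega_n^{2/(n+1)}((n+1)V)^{2n/(n+1)}$) together with the squaring step is precisely the computation the paper leaves implicit. The exponent bookkeeping and the equality-case transfer are both handled correctly.
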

	
	Denote by $B(r)$  the geodesic ball of radius $r>0$ centered at the origin in $\mathbb{H}^{n+1}$. 
	Assume that $0 \leq k \leq n$ is an integer. Let  $\widetilde{W}_k(B(r))$ denote the $k$-th modified quermassintegral of $B(r)$ and $\tilde{f}_k(r) := \widetilde{W}_k(B(r))$. Let $\tilde{h}_k(r) := \omega_n e^{-(k+1)r} \sinh^{n-k} r$. For the definition of modified quermassintegrals, we refer readers to \cite{ACW18, HLW20}. We remark that in the case $k=0$, $\widetilde{W}_0 (\Omega) := \Vol(\Omega)$ and hence $\tilde{f}_0(r)= \omega_n \int_0^r \sinh^n t dt$. Let $\tilde{\kappa}_i := \kappa_i-1$ be the shifted principal curvatures of a hypersurface $\Sigma \subset \mathbb{H}^{n+1}$. For each integer $1 \leq m \leq n$, $E_m(\tilde{\kappa})$ is the normalized $m$-th elementary symmetric function of $\tilde{\kappa}$ and  $\Gamma^{+}_m$ is the Garding cone. In this paper, we use $u := \metric{\sinh r \partial_r}{\nu}$ to denote the support function of a hypersurface $\Sigma \subset \mathbb{H}^{n+1}$, where $r$ denotes the distance from points on $\Sigma$ to the origin, and $\nu$ is the unit outer normal of $\Sigma$.
	The first named author with Hu and Wei \cite{HLW20} proved the following weighted Alexandrov-Fenchel inequalities for horo-convex bounded domains. 
	\begin{thmB}[\cite{HLW20}]
		Let $\Sigma = \partial \Omega$ be a smooth, horo-convex hypersurface in $\mathbb{H}^{n+1}$ with the origin lying inside $\Omega$. Let $k=1,\cdots,n$. If $\tilde{\kappa} \in \Gamma^+_k$, then there holds
		\begin{equation*}
		\int_\Sigma \(\cosh r-u \) E_k (\tilde{\kappa}) d\mu \geq \tilde{h}_k \circ \tilde{f}_k^{-1} \( \widetilde{W}_k (\Omega)\).
		\end{equation*}
		Equality holds if and only if $\Omega$ is a geodesic ball centered at the origin.
	\end{thmB}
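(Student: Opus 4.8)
The plan is to prove Theorem B by a locally constrained inverse curvature flow adapted to the modified quermassintegrals, in the spirit of the locally constrained inverse curvature flows used by Guan--Li and others. I would evolve $\Sigma_0 = \Sigma = \partial\Omega$ by a normal flow of the form
\begin{equation*}
\partial_t X = \left( \frac{n-k}{k+1}\,\frac{E_k(\tilde\kappa)}{E_{k+1}(\tilde\kappa)}\,\cosh r - u \right)\nu
\end{equation*}
(up to normalization), whose speed is a local function of the position and the shifted curvatures and which on an origin-centered geodesic sphere reduces to a pure dilation. The inverse-curvature term $E_k/E_{k+1}$ expands the hypersurface while the drift term $-u\,\nu$ pulls it inward, and the whole point is to balance them so that one geometric quantity is conserved and another is monotone, with their values on the limiting sphere reproducing the two sides of the claimed inequality.

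First I would record the first variation formulas for the modified quermassintegrals $\widetilde W_j$ (following \cite{ACW18, HLW20}) and a Minkowski-type identity for the shifted curvatures $\tilde\kappa$, which expresses the weighted integral $Q(t) := \int_{\Sigma_t}(\cosh r - u)E_k(\tilde\kappa)\,d\mu$ through the $\widetilde W_j$. The flow is engineered so that $\widetilde W_k(\Omega_t)$ is constant in $t$; this pins the radius of the eventual limit sphere to $r_\infty = \tilde f_k^{-1}(\widetilde W_k(\Omega))$. Combining the first variation of $Q$ with the Newton--Maclaurin inequalities $E_{k-1}E_{k+1}\le E_k^2$ (available because $\tilde\kappa\in\Gamma^+_k$) would then show that $Q(t)$ is monotone non-increasing, with equality in the monotonicity only at origin-centered umbilic spheres.

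Next I would check that the flow preserves the geometric class, i.e. that star-shapedness and the cone condition $\tilde\kappa\in\Gamma^+_k$ (indeed horo-convexity $\tilde\kappa_i\ge 0$, which also guarantees $E_{k+1}>0$ and hence parabolicity) persist. This is a tensor maximum principle applied to the evolution of the shifted Weingarten operator, exploiting the concavity and homogeneity of the speed $E_k/E_{k+1}$ on the Garding cone together with the favourable sign of the zeroth-order terms coming from the hyperbolic background. Granting the cone condition, short-time existence is standard, and I would then establish uniform a priori estimates: a $C^0$ bound confining $\Sigma_t$ between two origin-centered spheres (using the constancy of $\widetilde W_k$ and the evolution of $\max r$ and $\min r$), a gradient/star-shapedness bound, and a two-sided curvature bound, so as to obtain long-time existence and smooth exponential convergence of $\Sigma_t$ to $\partial B(r_\infty)$.

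Finally, evaluating $Q$ on the limit sphere gives $\lim_{t\to\infty} Q(t) = \tilde h_k(r_\infty) = \tilde h_k\circ\tilde f_k^{-1}(\widetilde W_k(\Omega))$, so monotonicity delivers $\int_\Sigma(\cosh r - u)E_k(\tilde\kappa)\,d\mu = Q(0)\ge \tilde h_k\circ\tilde f_k^{-1}(\widetilde W_k(\Omega))$; in the equality case $Q$ is constant, so $Q'\equiv 0$, and the equality case of Newton--Maclaurin forces every $\Sigma_t$ to be umbilic and hence $\Omega$ to be an origin-centered geodesic ball. The hard part will be the analytic core — the uniform a priori estimates and the convergence for this fully nonlinear, non-homogeneous speed, and in particular preventing the loss of the cone condition $\tilde\kappa\in\Gamma^+_k$ along the flow — whereas the variational bookkeeping that matches the two sides to $\tilde f_k$ and $\tilde h_k$ is comparatively formal.
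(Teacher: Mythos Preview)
Theorem~B is not proved in this paper; it is quoted verbatim from \cite{HLW20} as background, and the present paper contains no argument for it. What the paper does prove is the \emph{remaining $k=0$ case} (Corollary~\ref{cor-k=0 case of HLW}), and it does so by a completely different route from the flow you sketch: it uses the projection $\pi:\mathbb{H}^{n+1}\to\mathbb{R}^{n+1}$, $(x,x_{n+1})\mapsto x$, to convert the weighted hyperbolic quantities into Euclidean ones (Lemmas~\ref{lem-rel-weighted vol-Ecul vol} and \ref{lem-rel-weighted area-integral in Eucl}), applies Chambers's log-convex density isoperimetric inequality (Theorem~C) together with a Jensen step, and then a symmetrization comparison (Lemma~\ref{lem-ineq-weighted vol-cano col}) to pass from $\Vol_w(\Omega)$ to $\Vol(\Omega)$. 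No curvature flow, no Newton--Maclaurin inequalities, and no convexity assumption on $\Omega$ are used.

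Your proposal is, in outline, the strategy of \cite{HLW20} itself (locally constrained curvature flow preserving $\widetilde W_k$, monotonicity of the weighted curvature integral via Newton--Maclaurin, a priori estimates, convergence to a sphere), so as a blueprint for the cited result it is reasonable. But as a comparison with \emph{this} paper's proof there is nothing to compare: the paper neither reproves Theorem~B nor uses flow methods anywhere. If your intent was to supply what the paper omits, you should be aware that the genuinely delicate points you flag---preservation of horo-convexity (or at least $\tilde\kappa\in\Gamma_k^+$) and the full $C^2$ estimate for the inhomogeneous speed $(\cosh r)\,E_k/E_{k+1}-u$---are exactly where \cite{HLW20} does the real work, and your sketch does not go beyond naming them.
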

	As applications of Theorem \ref{thm-general weighted iso ineq}, we prove the following corollaries, which are also weighted isoperimetric inequalities.
	\begin{cor}\label{cor-k=0 case of HLW}
		Let $\Omega$ be a bounded domain in $\mathbb{H}^{n+1}$ with $C^1$ boundary. Then
		\begin{equation*}
		\int_{\partial \Omega} (\cosh r- u) d\mu \geq \tilde{h}_0 \circ \tilde{f}_0^{-1} (\Vol(\Omega)),
		\end{equation*}
		where $\tilde{h}_0(r) = \omega_n e^{-r} \sinh^{n} r$ and $\tilde{f}_0(r) = \omega_n \int_0^r \sinh^n t dt$.
		Equality holds if and only if $\Omega$ is a geodesic ball centered at the origin.
	\end{cor}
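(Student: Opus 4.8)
The plan is to reduce the statement to Corollary \ref{cor-weighted-iso-ineq} by first eliminating the support function $u$ through the divergence theorem, and then bridging the resulting weighted volume to $\Vol(\Omega)$ by a monotonicity-plus-rearrangement argument. First I would record that the radial field $\sinh r\,\partial_r$ has divergence $\divv(\sinh r\,\partial_r)=(n+1)\cosh r$ in $\mathbb{H}^{n+1}$. Since $u=\metric{\sinh r\,\partial_r}{\nu}$, the divergence theorem gives
\begin{equation*}
\int_{\partial\Omega}u\,d\mu=\int_\Omega\divv(\sinh r\,\partial_r)\,dv=(n+1)\int_\Omega\cosh r\,dv=:V,
\end{equation*}
so that, writing $A:=\int_{\partial\Omega}\cosh r\,d\mu$, the left-hand side becomes simply $\int_{\partial\Omega}(\cosh r-u)\,d\mu=A-V$.

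Next I would invoke Corollary \ref{cor-weighted-iso-ineq}, which bounds $A$ from below in terms of $V$:
\begin{equation*}
A\ge\sqrt{V^2+\omega_n^{2/(n+1)}V^{2n/(n+1)}},\qquad\text{hence}\qquad A-V\ge\Phi(V),
\end{equation*}
where $\Phi(V):=\sqrt{V^2+\omega_n^{2/(n+1)}V^{2n/(n+1)}}-V$. A short computation of $\Phi'$ (squaring the relevant inequality) shows that $\Phi$ is strictly increasing on $(0,\infty)$ for every $n\ge1$, the key cancellation producing the nonnegative expression $\tfrac{n-1}{n+1}\,\omega_n^{2/(n+1)}V^{2n/(n+1)}+\tfrac{n^2}{(n+1)^2}\,\omega_n^{4/(n+1)}V^{2(n-1)/(n+1)}$.

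It then remains to bound $V$ from below by its value on the centered ball of the same volume. Let $\rho:=\tilde{f}_0^{-1}(\Vol(\Omega))$, so that $\Vol(B(\rho))=\Vol(\Omega)$. Because $\cosh r$ is radially increasing, a standard rearrangement argument comparing $\Omega$ with $B(\rho)$ on the symmetric difference (on $\Omega\setminus B(\rho)$ one has $\cosh r\ge\cosh\rho$, on $B(\rho)\setminus\Omega$ one has $\cosh r\le\cosh\rho$, and the two pieces have equal volume) yields
\begin{equation*}
\int_\Omega\cosh r\,dv\ge\int_{B(\rho)}\cosh r\,dv=\frac{\omega_n}{n+1}\sinh^{n+1}\rho,
\end{equation*}
that is $V\ge\omega_n\sinh^{n+1}\rho=:V_\rho$. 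A direct evaluation gives $\Phi(V_\rho)=\omega_n e^{-\rho}\sinh^n\rho=\tilde{h}_0(\rho)$. Combining the three steps with the monotonicity of $\Phi$,
\begin{equation*}
\int_{\partial\Omega}(\cosh r-u)\,d\mu=A-V\ge\Phi(V)\ge\Phi(V_\rho)=\tilde{h}_0\circ\tilde{f}_0^{-1}(\Vol(\Omega)).
\end{equation*}

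For the equality discussion, equality in Corollary \ref{cor-weighted-iso-ineq} already forces $\Omega$ to be a geodesic ball centered at the origin, which is consistent with the rearrangement step (whose equality case is again $\Omega=B(\rho)$), and conversely every centered ball saturates the entire chain. The main obstacle I anticipate is precisely the mismatch of weights: Corollary \ref{cor-weighted-iso-ineq} controls the $\cosh$-weighted volume $V$, whereas the target is phrased through the unweighted $\Vol(\Omega)$, so the whole argument hinges on coupling these two quantities through the common reference ball $B(\rho)$ via the strict monotonicity of $\Phi$ together with the rearrangement inequality.
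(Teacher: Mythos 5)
Your proposal is correct and follows essentially the same route as the paper: the divergence theorem identity $\int_{\partial\Omega}u\,d\mu=(n+1)\int_\Omega\cosh r\,dv$, Corollary \ref{cor-weighted-iso-ineq}, monotonicity of the resulting function of the weighted volume, and the rearrangement comparison with the centered ball of equal volume (which is exactly the paper's Lemma \ref{lem-ineq-weighted vol-cano col}). The only difference is cosmetic: you write out the monotonicity computation for $\Phi$ that the paper dismisses as ``easy to see,'' and you inline the rearrangement lemma rather than quoting it.
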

	\begin{rem}
		We note that $E_0(\tilde{\kappa}) \equiv 1$ on $\partial \Omega$ and $\widetilde{W}_0 (\Omega) := \Vol(\Omega)$.
		Thus, Corollary \ref{cor-k=0 case of HLW} corresponds to the remaining $k=0$ case in Theorem B.
	\end{rem}
	
	The following Corollary \ref{cor-another app} corresponds to the remaining $k=0$ case in \cite[Theorem 1.4]{HLW20} by a similar argument as above. Hence we omit the details.
	\begin{cor}\label{cor-another app}
		Let $\Omega$ be a bounded domain in $\mathbb{H}^{n+1}$ with  $C^1$ boundary. Then
		\begin{equation*}
		\int_{\partial \Omega} \cosh r d\mu \geq h_0\circ f_0^{-1} (\Vol(\Omega)),
		\end{equation*}
		where $h_0(r) =\omega_n \cosh r \sinh^n r$ and $f_0(r)=\omega_n \int_0^r \sinh^n t dt$.
		Equality holds if and only if $\Omega$ is a geodesic ball centered at the origin.
	\end{cor}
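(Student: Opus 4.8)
The plan is to deduce Corollary~\ref{cor-another app} from Theorem~\ref{thm-general weighted iso ineq} with the trivial weight $\phi\equiv 1$, and then to bridge the gap between the weighted volume $\int_\Omega \cosh r\,dv$ appearing there and the unweighted volume $\Vol(\Omega)$ appearing in the corollary by an elementary rearrangement comparison, exactly in the spirit of the proof of Corollary~\ref{cor-k=0 case of HLW}.

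First I would take $\phi\equiv 1$, which is even, positive, and satisfies $(\log\phi)''=0\ge 0$. The defining relation \eqref{def-psi} then becomes $\psi\big(\tfrac{\omega_n}{n+1}t^{n+1}\big)=\omega_n t^n\sqrt{t^2+1}$, and substituting $t=\sinh\rho$ shows that $\psi$ sends the weighted volume of a centered ball to its weighted perimeter:
\[ \psi\left(\int_{B(\rho)}\cosh r\,dv\right)=\omega_n \sinh^n\rho\,\cosh\rho=h_0(\rho). \]
With this choice Theorem~\ref{thm-general weighted iso ineq} reads $\int_{\partial\Omega}\cosh r\,d\mu\ge \psi\big(\int_\Omega \cosh r\,dv\big)$.

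Next, let $\rho=f_0^{-1}(\Vol(\Omega))$, so that the centered ball $B(\rho)$ has the same volume as $\Omega$. I claim $\int_\Omega \cosh r\,dv\ge \int_{B(\rho)}\cosh r\,dv$. This is the only genuinely new ingredient and follows from a \emph{rearrangement}: since $|\Omega\setminus B(\rho)|=|B(\rho)\setminus\Omega|$, and $\cosh r\ge \cosh\rho$ on $\Omega\setminus B(\rho)$ while $\cosh r\le\cosh\rho$ on $B(\rho)\setminus\Omega$, the difference $\int_{\Omega\setminus B(\rho)}\cosh r\,dv-\int_{B(\rho)\setminus\Omega}\cosh r\,dv$ is nonnegative, with equality if and only if $\Omega$ and $B(\rho)$ coincide up to a null set. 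Equivalently, the centered balls are precisely the sublevel sets of the radially increasing weight $\cosh r$, so among sets of fixed volume they minimize $\int_\Omega\cosh r\,dv$.

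Finally I would verify that $\psi$ is monotone increasing by differentiating \eqref{def-psi} with $\phi\equiv 1$ (the derivative is strictly positive), so that the two inequalities chain together:
\[ \int_{\partial\Omega}\cosh r\,d\mu\ \ge\ \psi\left(\int_\Omega \cosh r\,dv\right)\ \ge\ \psi\left(\int_{B(\rho)}\cosh r\,dv\right)=h_0(\rho)=h_0\circ f_0^{-1}(\Vol(\Omega)), \]
which is the asserted inequality. For the equality case, equality in Theorem~\ref{thm-general weighted iso ineq} forces $\Omega$ to be a centered geodesic ball while equality in the rearrangement step forces $\Omega=B(\rho)$, and conversely a centered ball gives equality throughout. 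I expect the rearrangement comparison between $\int_\Omega \cosh r\,dv$ and $\Vol(\Omega)$—together with checking that $\psi$ is increasing so the chain is valid—to be the only delicate point; the remaining manipulations with the profile functions $f_0$, $h_0$ and $\psi$ are routine.
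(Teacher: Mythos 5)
Your proposal is correct and is essentially the paper's own argument: the paper derives Corollary \ref{cor-another app} by combining Corollary \ref{cor-weighted-iso-ineq} (Theorem \ref{thm-general weighted iso ineq} with $\phi\equiv 1$) with Lemma \ref{lem-ineq-weighted vol-cano col}, which is exactly your rearrangement comparison $\int_\Omega \cosh r\,dv \geq \int_{B(\rho)}\cosh r\,dv$ for the centered ball of equal volume, and then chains the two bounds using monotonicity of the profile. The only cosmetic difference is that the paper states the first step in its explicit squared form rather than through the function $\psi$.
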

	
	Let $M^{n+1}= [a, b] \times \mathbb{S}^n$ be a warped product manifold with metric 
	\begin{equation*}
	\bar{g} = dr^2 + \lambda^2(r) g_{\mathbb{S}^n},
	\end{equation*}
	where $\lambda(a) \geq 0$, $\lambda' \geq 1$ on $[a,b]$, and $b$ could be infinity. 
	Define $\Psi(t) := \lambda' \circ \lambda^{-1} (t) $ and $\Lambda (t) := \frac{\sqrt {\Psi^2(t) -1 }}{t}$ for $ t \in [\lambda(a), \lambda(b)]$.
	Using a similar method as in the proof of Theorem \ref{thm-general weighted iso ineq}, we prove a class of weighted isoperimetric inequalities for star-shaped domains in warped product manifolds.
	\begin{thm}\label{thm-iso-warped}
		Let $\Sigma$ be a star-shaped, $C^1$ hypersurface in $M^{n+1}$. Let $\Omega$ be the domain bounded by $\Sigma$ and $\{a\} \times \mathbb{S}^n$. 
		Assume that $\Lambda(t)$ is smooth, non-decreasing and satisfies $\( \Lambda'(t) t \)' \geq 0$ for all $ t \in [\lambda(a), \lambda(b)]$.
		Let $\phi(t)$ be a smooth, positive, even function on $[-\lambda(b), \lambda(b)]$ that satisfies $\(\log \phi\)'' \geq 0$.  Then
		\begin{equation}\label{weighted isope warped produc}
		\int_\Sigma \phi(\lambda(r)) \lambda'(r) d\mu \geq \tilde{\psi} \( \int_\Omega \phi(\lambda(r)) \lambda'(r) dv\), 
		\end{equation}
		where $\tilde{\psi}$ satisfies
		\begin{equation*}
		\tilde{\psi} \( \omega_n \int_{\lambda(a)}^t \phi(s) s^n ds\) = \omega_n \phi(t) t^n \Psi(t), \quad \forall \ t \in [\lambda(a), \lambda(b)].
		\end{equation*}
		Equality holds in \eqref{weighted isope warped produc} if and only if $\Sigma$ is a radial coordinate slice $\{r_0\} \times \mathbb{S}^n$, where $r_0 \in [a, b]$. 
	\end{thm}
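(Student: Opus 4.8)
The plan is to reduce \eqref{weighted isope warped produc} to a Euclidean weighted isoperimetric inequality and then invoke Chambers' theorem, exactly in the spirit of the proof of Theorem \ref{thm-general weighted iso ineq}. Since $\Sigma$ is star-shaped I first write it as a radial graph $r=r(\theta)$, $\theta\in\mathbb{S}^n$, and put $\rho=\rho(\theta):=\lambda(r(\theta))$. The induced metric on $\Sigma$ is $\lambda^2 g_{\mathbb{S}^n}$ plus the rank-one term $dr\otimes dr$, so a direct computation of the area element together with $|\nabla\rho|=\lambda'|\nabla r|$ gives
\[
\int_\Sigma \phi(\lambda)\lambda'\,d\mu=\int_{\mathbb{S}^n}\phi(\rho)\,\rho^n\sqrt{\Psi(\rho)^2+\frac{|\nabla\rho|^2}{\rho^2}}\,d\theta,
\]
while the substitution $s=\lambda(r)$ turns the weighted volume into
\[
\int_\Omega \phi(\lambda)\lambda'\,dv=\int_{\mathbb{S}^n}\Big(\int_{\lambda(a)}^{\rho(\theta)}\phi(s)s^n\,ds\Big)\,d\theta,
\]
where $\nabla$ and $d\theta$ denote the gradient and volume element of the round $\mathbb{S}^n$. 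In particular, under the radial map $x\mapsto\rho(\theta)\,\theta$ of $M^{n+1}$ into $\mathbb{R}^{n+1}$ the right-hand integral is exactly the $\phi$-weighted Euclidean volume of the star-shaped region $\{\lambda(a)\le|x|\le\rho(\theta)\}$, and the left-hand side becomes the Euclidean integral of $\phi(\rho)\rho^n\sqrt{\Psi(\rho)^2+\rho^{-2}|\nabla\rho|^2}$. This is the verbatim analogue of the hyperbolic situation in Theorem \ref{thm-general weighted iso ineq}, where $\lambda=\sinh r$ forces $\Psi(t)=\sqrt{1+t^2}$.

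The next step is to apply Chambers' isoperimetric inequality with the log-convex density $\phi$, following the proof of Theorem \ref{thm-general weighted iso ineq}. The only new feature is that the weight $\lambda'$ now contributes the factor $\Psi(t)=\sqrt{1+t^2\Lambda(t)^2}$ in place of $\sqrt{1+t^2}$; note that $\Psi\ge1$ because $\lambda'\ge1$. The comparison function $\tilde\psi$ is pinned down entirely by the coordinate-slice case: for $\Sigma=\{r_0\}\times\mathbb{S}^n$ one has $\rho\equiv\lambda(r_0)$, $\nabla\rho\equiv0$, so the two displays above become $\omega_n\phi(\lambda(r_0))\lambda(r_0)^n\Psi(\lambda(r_0))$ and $\omega_n\int_{\lambda(a)}^{\lambda(r_0)}\phi(s)s^n\,ds$, which is precisely the defining relation of $\tilde\psi$. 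Thus the inequality to be proved asserts that, among star-shaped graphs of fixed weighted volume, the above perimeter functional is minimized by the centered sphere (the slice), and the whole task is to transport Chambers' centered-ball minimality through the presence of $\Psi$.

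The hypotheses on $\Lambda$ are exactly what make this transport legitimate, generalizing the trivial hyperbolic case. Set $\kappa(t):=t\,\Lambda(t)=\sqrt{\Psi(t)^2-1}$. Then $\kappa$ is nonnegative, and $\kappa'=\Lambda+t\Lambda'\ge0$ while $\kappa''=\Lambda'+(\Lambda' t)'\ge0$, so $\kappa$ is nondecreasing and convex; consequently $\Psi=\sqrt{1+\kappa^2}$ is nondecreasing. These are the precise analogues of the hyperbolic data $\kappa(t)=t$ (linear, hence convex and increasing), and together with $(\log\phi)''\ge0$ they are the hypotheses under which the one-dimensional model comparison in the proof of Theorem \ref{thm-general weighted iso ineq} — which governs the profile $\tilde\psi$ and forces equality only on centered spheres — continues to hold. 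I would check that every place where the hyperbolic proof used the special form $\sqrt{1+t^2}$ uses only the monotonicity of $\Psi$ and the convexity of $\kappa$, and then quote that argument.

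The hard part will be the sharp retention of the factor $\Psi$. The subtle point is that the excess $\Psi^2-1=\kappa^2=t^2\Lambda^2$ is a genuinely non-Euclidean quantity: dropping it (i.e. replacing $\sqrt{\Psi^2+\rho^{-2}|\nabla\rho|^2}$ by the honest Euclidean integrand $\sqrt{1+\rho^{-2}|\nabla\rho|^2}$ and then applying Chambers) matches the weighted volume exactly but loses the factor $\Psi$ and yields a strictly weaker profile than $\tilde\psi$; conversely, estimating $\sqrt{\Psi^2+\rho^{-2}|\nabla\rho|^2}\ge\Psi$ discards the gradient term, and since $\tilde\psi$ is in fact concave this radial bound alone falls short of $\tilde\psi(V)$ by Jensen. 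Hence the $\Psi$-term and the gradient term must be kept together, precisely as in Theorem \ref{thm-general weighted iso ineq}, and verifying that the $\Lambda$-conditions above suffice for this is the crux. The remaining points are routine: a standard approximation reduces the $C^1$ boundary to the smooth star-shaped graph case used above; the inner slice $\{a\}\times\mathbb{S}^n$ maps to the centered sphere of radius $\lambda(a)$ and only shifts the lower limit of integration; and the equality discussion traces back through the slice case to show that equality forces $|\nabla\rho|\equiv0$ and $\rho\equiv\text{const}$, i.e. $\Sigma$ is a radial coordinate slice.
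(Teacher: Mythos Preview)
Your overall strategy --- push $\Sigma$ to a Euclidean star-shaped graph via $\rho=\lambda(r)$, match the weighted volume exactly, and then invoke Chambers --- is the paper's strategy, and your diagnosis that neither ``drop $\Psi$'' nor ``drop the gradient'' works is correct. But the proposal stops exactly at the hard part: you write that you ``would check that every place where the hyperbolic proof used $\sqrt{1+t^2}$ uses only the monotonicity of $\Psi$ and the convexity of $\kappa$, and then quote that argument.'' That is the step that has to be \emph{done}, not quoted, and your proposed criteria (monotone $\Psi$, convex $\kappa$) are not the properties that actually drive the argument.

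The missing mechanism is this. Your area integrand factors as
\[
\phi(\rho)\,\rho^n\sqrt{\Psi(\rho)^2+\rho^{-2}|\nabla\rho|^2}\,d\theta
=\phi(\rho)\sqrt{\bigl(\Lambda(\rho)\,\widehat{u}\bigr)^2+1}\;d\widehat{\mu},
\]
with $\widehat{u}$ the Euclidean support function and $d\widehat{\mu}$ the Euclidean area element of the image hypersurface (this is Lemma~\ref{lem-rel-warped-area}). Now apply Jensen to the convex function $t\mapsto\sqrt{t^2+1}$ with variable $\Lambda(\rho)\widehat{u}$ and weight $\phi(\rho)\,d\widehat{\mu}$; this splits the square of the left side into $\bigl(\int_{\widehat{\Sigma}}\phi\Lambda\widehat{u}\,d\widehat{\mu}\bigr)^2+\bigl(\int_{\widehat{\Sigma}}\phi\,d\widehat{\mu}\bigr)^2$. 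The second term is bounded by Chambers (Lemma~\ref{lem-ThmC-modified}). For the first, the divergence theorem gives $(n+1)\int_{\widehat{\Omega}}\phi\Lambda\,d\widehat{v}+\int_{\widehat{\Omega}}(\phi\Lambda)'\rho\,d\widehat{v}$ plus a constant inner-boundary term, and a symmetrization as in Lemma~\ref{lem-sym of phi} compares each of these with the centered ball. It is \emph{here} that the hypotheses on $\Lambda$ enter, and they enter as stated: one needs $\bigl(\phi\Lambda/\phi\bigr)'=\Lambda'\ge0$ and
\[
\Bigl(\tfrac{(\phi\Lambda)'\rho}{\phi}\Bigr)'=(\log\phi)''\,\Lambda\rho+(\log\phi)'\,(\Lambda\rho)'+(\Lambda'\rho)'\ge0,
\]
which holds precisely because $(\log\phi)''\ge0$, $\Lambda'\ge0$, and $(\Lambda'\rho)'\ge0$. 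No convexity of $\kappa$ or of $\Psi$ is ever invoked; the only convex function in the proof is $t\mapsto\sqrt{t^2+1}$, exactly as in the hyperbolic case. Equality in the Jensen step forces $\Lambda(\rho)\widehat{u}$ constant, and evaluating at the extrema of $\rho$ together with $\Lambda'\ge0$ yields $\rho$ constant, i.e.\ $\Sigma$ is a slice.
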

	\begin{rem}
		In the case $M^{n+1} = \mathbb{H}^{n+1}$, we have $\lambda(r) = \sinh r$, $\Psi(t) = \cosh \( \sinh^{-1}(t) \) = \sqrt{t^2+1}$ and $\Lambda(t) =1$ for all $t \in [0, +\infty)$. Then Theorem \ref{thm-iso-warped} reduces to Theorem \ref{thm-general weighted iso ineq} when $\Sigma= \partial \Omega$ is star-shaped.
	\end{rem}
	
	The anti-de Sitter-Schwarzschild manifold can be viewed as a special type of warped product manifolds, where $\lambda(r)$ satisfies
	\begin{equation}\label{warp-func-ads-S}
	\lambda'(r) = \sqrt{1+ \lambda^2- m \lambda^{1-n}}
	\end{equation}
	for some constant $m \geq 0$.
	As a consequence, $\lambda''(r) = \lambda(r) + \frac{m(n-1)}{2}\lambda^{-n} $.  We refer readers to \cite{BHW16} for more details of the anti-de Sitter-Schwarzschild manifold.
	If we remove the restrictions of $\Lambda (t)$ in Theorem \ref{thm-iso-warped}, then we can also get interesting inequalities. For example, we obtain a weighted isoperimetric inequality for  star-shaped hypersurfaces in the anti-de Sitter-Schwarzschild manifold that lies outside a certain radial coordinate slice.
	\begin{thm}\label{thm-iso-ads-S}
		Let $M^{n+1} = [a,+ \infty) \times \mathbb{S}^n$ be equipped with the anti-de Sitter-Schwarzschild metric,  where $a$ satisfies $\lambda(a) =m^{\frac{1}{n+1}}$.
		Let $\Sigma$ be a star-shaped, $C^1$ hypersurface in $M^{n+1}$. Let $\Omega$ be the domain bounded by $\Sigma$ and $\{a\} \times \mathbb{S}^n$. Then
		\begin{equation}\label{weighted isope adsS}
		\begin{aligned}
		\(\int_\Sigma \lambda'(r) d\mu\)^2 
		\geq& 
		\(\(n+1\)\widehat{\eta}\(\int_\Omega \lambda'(r) dv  \)
		+ \frac{m(n+1)}{2}\int_{\Omega} \frac{\lambda'(r)}{\lambda^n(r) \sqrt{\( \lambda'(r) \)^2-1}}dv\)^2\\
		&+ \( (n+1) \omega_n^{\frac{1}{n}}  \int_\Omega \lambda'(r) dv+ \omega_n^{\frac{n+1}{n}} m  \)^\frac{2n}{n+1},
		\end{aligned}
		\end{equation}
		where $\widehat{\eta}$ satisfies 
		\begin{equation}\label{def-eta-hat}
		\widehat{\eta} \(\omega_n \int_{m^{\frac{1}{n+1}}}^{t} s^n ds \) = \omega_n \int_{m^{\frac{1}{n+1}}}^t \Lambda(s) s^n ds, \quad  \forall \ t \geq m^{\frac{1}{n+1}}. 
		\end{equation}
		Equality holds in \eqref{weighted isope adsS} if and only if $\Sigma$ is a radial coordinate slice $\{r_0\} \times \mathbb{S}^n$, where $r_0 \in [a, +\infty)$.
	\end{thm}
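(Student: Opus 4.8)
The plan is to run the same Euclidean-reduction scheme used for Theorems~\ref{thm-general weighted iso ineq} and~\ref{thm-iso-warped}, but to replace the (now unavailable) convexity hypotheses on $\Lambda$ by the special ODE satisfied by the anti-de Sitter-Schwarzschild warping function. First I would transplant everything to $\mathbb{R}^{n+1}$ through the radial diffeomorphism $\Phi\colon (r,\xi)\mapsto \lambda(r)\,\xi$, which preserves the spherical factor and sends $r$ to the Euclidean radius $t=\lambda(r)$. Writing $\Sigma$ as the graph $r=r(\xi)$, its image $\widetilde\Sigma=\Phi(\Sigma)$ is the Euclidean radial graph $t=\lambda(r(\xi))$, and $\Phi(\Omega)=\widetilde\Omega$ is the region enclosed between $\{|x|=\lambda(a)\}$ and $\widetilde\Sigma$. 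Because $\lambda'\,dv=dx$ under $\Phi$, the weighted volume equals the Euclidean volume, $\int_\Omega \lambda'\,dv=|\widetilde\Omega|=:W$, and a computation of the induced area element gives
\[
\int_\Sigma \lambda'\,d\mu=\int_{\mathbb{S}^n} t^n\sqrt{\Psi^2(t)+t^{-2}|\nabla t|^2}\,d\theta,
\]
with $\nabla$ and $d\theta$ taken on the round sphere. Using $\Psi^2=1+\Lambda^2 t^2$, I read the integrand as the norm of the $\mathbb{R}^2$-valued field $\big(t^n\sqrt{1+t^{-2}|\nabla t|^2},\,\Lambda(t)t^{n+1}\big)$.

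The first step is the triangle (Minkowski) inequality for vector integrals, which yields
\[
\Big(\int_\Sigma \lambda'\,d\mu\Big)^2\ \geq\ P^2+C^2,\qquad P:=\int_{\mathbb{S}^n} t^n\sqrt{1+t^{-2}|\nabla t|^2}\,d\theta,\quad C:=\int_{\mathbb{S}^n}\Lambda(t)t^{n+1}\,d\theta.
\]
Here $P$ is exactly the Euclidean perimeter of the star-shaped domain $\widehat\Omega:=\widetilde\Omega\cup\{|x|\le \lambda(a)\}$, so the sharp isoperimetric inequality $P^2\ge \omega_n^{2/(n+1)}\big((n+1)|\widehat\Omega|\big)^{2n/(n+1)}$, together with $(n+1)|\widehat\Omega|=(n+1)W+\omega_n m$ (using $\lambda(a)^{n+1}=m$), reproduces the second term on the right-hand side of \eqref{weighted isope adsS}.

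The heart of the argument is to show that $C^2$ dominates the first squared term. Here I would use that, in the anti-de Sitter-Schwarzschild case, $\Lambda^2(t)=1-m\,t^{-(n+1)}$, so that $\Lambda'(t)\,t^{n+1}=\tfrac{m(n+1)}{2\,t\,\Lambda(t)}$; integrating this relation from $m^{1/(n+1)}$ to $t$ (both sides vanish at $m^{1/(n+1)}$, and the improper integral converges because $\Lambda$ vanishes like a square root there) gives the pointwise identity
\[
\Lambda(t)\,t^{n+1}=(n+1)\int_{m^{1/(n+1)}}^{t}\Lambda(s)s^n\,ds+\frac{m(n+1)}{2}\int_{m^{1/(n+1)}}^{t}\frac{ds}{s\,\Lambda(s)}.
\]
Integrating over $\mathbb{S}^n$ and changing variables back to $\Omega$, the second summand becomes exactly $\tfrac{m(n+1)}{2}\int_\Omega \tfrac{\lambda'}{\lambda^n\sqrt{(\lambda')^2-1}}\,dv$, while the first becomes $(n+1)\int_{\widetilde\Omega}\Lambda(|x|)\,dx$. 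Finally, since $\Lambda$ is non-decreasing, the map $u\mapsto G(F^{-1}(u))$ with $F(t)=\int_{m^{1/(n+1)}}^t s^n\,ds$ and $G(t)=\int_{m^{1/(n+1)}}^t \Lambda(s)s^n\,ds$ has derivative $\Lambda$ and is therefore convex, so Jensen's inequality gives $\int_{\widetilde\Omega}\Lambda(|x|)\,dx\ge \widehat\eta(W)$ with $\widehat\eta$ as in \eqref{def-eta-hat}. Combining, $C\ge (n+1)\widehat\eta(W)+\tfrac{m(n+1)}{2}\int_\Omega \tfrac{\lambda'}{\lambda^n\sqrt{(\lambda')^2-1}}\,dv$, and inserting the three estimates into the Minkowski bound proves \eqref{weighted isope adsS}.

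For the equality case, a radial slice forces $\nabla t\equiv 0$, making the Minkowski, isoperimetric, and Jensen inequalities simultaneously sharp; conversely, equality in the isoperimetric step already forces $\widehat\Omega$ to be a ball centered at the origin, hence $\widetilde\Sigma$ a centered sphere and $\Sigma$ a radial coordinate slice. The main obstacle I anticipate is the bookkeeping in the crucial identity: one must verify the ODE for $\Lambda$, the convergence of the $\tfrac{ds}{s\,\Lambda(s)}$ integral at $m^{1/(n+1)}$, and that the change of variables deposits the two volume terms in precisely the stated form. This identity is exactly what lets the special structure of the anti-de Sitter-Schwarzschild metric substitute for the monotonicity and convexity assumptions on $\Lambda$ required in Theorem~\ref{thm-iso-warped}.
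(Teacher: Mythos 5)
Your proof of the inequality itself is correct and is, at heart, the same argument as the paper's, written in different language. Your triangle (Minkowski) inequality for the $\mathbb{R}^2$-valued field is identical to the paper's Jensen inequality \eqref{Jensen-general} with $\phi\equiv 1$, since $\int_{\widehat{\Sigma}}\Lambda(\rho)\widehat{u}\,d\widehat{\mu}=\int_{\mathbb{S}^n}\Lambda(t)t^{n+1}\,d\theta=C$ and $\int_{\widehat{\Sigma}}d\widehat{\mu}=P$. Your pointwise identity, obtained by integrating $\frac{d}{ds}\bigl(\Lambda(s)s^{n+1}\bigr)=(n+1)\Lambda(s)s^{n}+\frac{m(n+1)}{2s\Lambda(s)}$ radially and then over $\mathbb{S}^n$, is exactly the paper's divergence-theorem step \eqref{div-formula} combined with \eqref{integ-Lam'rho}, the boundary term vanishing because $\Lambda(m^{\frac{1}{n+1}})=0$. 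Your Jensen argument for the convex function $G\circ F^{-1}$ (whose derivative is $\Lambda\circ F^{-1}$, non-decreasing) is an equivalent substitute for the paper's symmetrization step \eqref{sym-ads-s}; both deliver $\int_{\widetilde{\Omega}}\Lambda(\abs{x})\,dx\geq\widehat{\eta}(W)$, and your normalization is consistent with \eqref{def-eta-hat}. All the computations you flag as needing verification ($\Lambda^2(t)=1-mt^{-(n+1)}$, the ODE for $\Lambda$, integrability of $\frac{1}{s\Lambda(s)}$ at the horizon, the change of variables back to $\Omega$) do check out.

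The one genuine gap is in the converse of the equality case. You claim that equality in the isoperimetric step ``already forces $\widehat{\Omega}$ to be a ball centered at the origin.'' Equality in the classical isoperimetric inequality only forces $\widehat{\Omega}\cup B(m^{\frac{1}{n+1}})$ to be a round ball; it does not force that ball to be centered at the origin. An off-center ball containing $B(m^{\frac{1}{n+1}})$ whose boundary is star-shaped about the origin is a perfectly admissible configuration, and for it $\Sigma$ would not be a radial slice, so the step you cite cannot by itself close the argument. The centering must come from one of the other equalities that are simultaneously forced: either (i) equality in your Minkowski step, which forces the $\mathbb{R}^2$-valued field to have constant direction, i.e.\ $\Lambda(t)\widehat{u}$ constant on $\widehat{\Sigma}$; evaluating at the maximum and minimum points of $t$, where $\widehat{u}=t$, and using that $t\mapsto\Lambda(t)t=\sqrt{t^2-mt^{1-n}}$ is strictly increasing for $t>m^{\frac{1}{n+1}}$ gives $t_{\max}=t_{\min}$ (this is the paper's route, inherited from the proof of Theorem \ref{thm-iso-warped}); or (ii) for $m>0$, equality in your Jensen step: since $\Lambda$ is strictly increasing, $G\circ F^{-1}$ is strictly convex, so equality forces $F(t(\theta))$, hence $t$, to be constant on $\mathbb{S}^n$. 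Either repair is a one-line addition, but as written the converse does not follow from the isoperimetric equality alone.
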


	Since a large amount of work has been done for weighted isoperimetric inequalities, we list some of them for readers interested in this topic, see e.g. \cite{BDKS18, BBCLT16, BHW16, GWW15, GR20, Kwo16, Mor05}. 
	
	\begin{ack}
	The authors were supported by NSFC Grant No. 11831005 and NSFC Grant No. 12126405. This paper was submitted to Proc. Amer. Math. Soc. in March, 2022 and appears in Proc. AMS, see doi: 10.1090/proc/16219. We note that L. Silini studied the related problem recently in the paper ``Approaching the isoperimetric problem in $H^m_{\mathbb{C}}$ via the hyperbolic log-convex density conjecture",
	  arXiv: 2208.00195.
	\end{ack}

	\section{Weighted isoperimetric inequalities in $\mathbb{H}^{n+1}$}\label{Sec-2}
	In Section \ref{Sec-2}, we give the proofs of Theorem \ref{thm-general weighted iso ineq} and Corollaries \ref{cor-k=0 case of HLW} -- \ref{cor-another app}.
	
	The Minkowski space $\mathbb{R}^{n+1,1}$ is an $(n+2)$-dimensional vector space with metric 
	\begin{equation*}
	\metric{X}{Y} =\metric{(x_0,x_1, \cdots,x_{n+1})}{(y_0,y_1,\cdots, y_{n+1})}
	= \sum_{i=0}^n x_iy_i- x_{n+1}y_{n+1}.
	\end{equation*}
	We write $X = (x, x_{n+1})$ for $X \in \mathbb{R}^{n+1,1}$, where $x \in \mathbb{R}^{n+1}$. The hyperboloid model of the hyperbolic space is $\mathbb{H}^{n+1} = \lbrace X \in \mathbb{R}^{n+1,1} \left\vert  \metric{X}{X} = -1, x_{n+1}>0 \right. \rbrace$. That implies that $X$ is the normal vector of $\mathbb{H}^{n+1} \subset \mathbb{R}^{n+1,1}$. For any $X \in \mathbb{H}^{n+1}$, the polar coordinates of $\mathbb{H}^{n+1}$ give that $X = (x, x_{n+1}) = \(  \sinh r \theta, \cosh r\)$ for some $\theta \in \mathbb{S}^n$ and $r \geq 0$. Here $r$ represents the geodesic distance from $X$ to $(0,1)$ in $\mathbb{H}^{n+1}$. 
	In this section, we let $\lambda = \abs{x} = \sinh r$ and $\lambda' =x_{n+1} = \cosh r$.
	For the above facts, we refer readers to Petersen's book \cite{Pet16}.
	
	Now we define the projection map $\pi : \mathbb{H}^{n+1} \to \mathbb{R}^{n+1}$ by  $\pi  \(X \) = x$. 
	Let $\Sigma = \partial \Omega$ be a closed, $C^1$ hypersurface in $\mathbb{H}^{n+1}$. We define $\widehat{\Omega} = \pi(\Omega)$ and  $\widehat{\Sigma} = \pi (\Sigma)$. We use $d \widehat{v}$ to denote the volume element of $\mathbb{R}^{n+1}$. Obviously, $\widehat{\Sigma} \subset \mathbb{R}^{n+1}$ is a closed, $C^1$ hypersurface as $\pi$ is a diffeomorphism from $\mathbb{H}^{n+1}$ to $\mathbb{R}^{n+1}$. Let $\widehat{\nu}$ denote the unit outer normal of $\widehat{\Sigma}$ and $\widehat{u}:= \metric{x}{\widehat{\nu}}$ denote the Euclidean support function of $\widehat{\Sigma}$. For any point $x$ in $\mathbb{R}^{n+1}$, we let $\rho = \abs{x}$.
	
	\begin{lem}\label{lem-rel-weighted vol-Ecul vol}
		Let $\phi$ be any continuous function on $\mathbb{R}$.
		The weighted volume of a bounded domain $\Omega \subset \mathbb{H}^{n+1}$ equals to the weighted volume of $\widehat{\Omega} \subset \mathbb{R}^{n+1}$, i.e.
		\begin{equation}\label{rel-weighted vol-Ecul vol}
		\int_{\Omega} \phi(\lambda)\lambda' dv = \int_{\widehat{\Omega}} \phi(\rho) d \widehat{v}.
		\end{equation} 
	\end{lem}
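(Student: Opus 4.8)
The plan is to reduce \eqref{rel-weighted vol-Ecul vol} to a change of variables in polar coordinates, exploiting that $\pi$ acts as a radial reparametrization while fixing the angular directions.

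First I would write both spaces in polar coordinates over the common sphere $\mathbb{S}^n$. On $\mathbb{H}^{n+1}$ every point is $X = (\sinh r\, \theta, \cosh r)$ with $\theta \in \mathbb{S}^n$ and $r \geq 0$, and the hyperbolic volume element is $dv = \sinh^n r\, dr\, d\theta$, where $d\theta$ denotes the volume element of $\mathbb{S}^n$. Applying the projection gives $\pi(X) = x = \sinh r\, \theta$, so the Euclidean radius is $\rho = \abs{x} = \sinh r = \lambda$ while the angular coordinate $\theta$ is unchanged. In particular $\pi$ restricts to a diffeomorphism of $\Omega$ onto $\widehat{\Omega}$ which in these coordinates is simply $(r,\theta)\mapsto(\sinh r,\theta)$, monotone in the radial variable.

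Next I would compute the Jacobian factor. The Euclidean volume element is $d\widehat{v} = \rho^n\, d\rho\, d\theta$; since $\rho = \sinh r$ we have $d\rho = \cosh r\, dr$, hence $d\widehat{v} = \sinh^n r\, \cosh r\, dr\, d\theta = \cosh r\, dv = \lambda'\, dv$. Substituting $\rho = \sinh r = \lambda$ on the right-hand side of \eqref{rel-weighted vol-Ecul vol} and using that $\pi$ carries $\Omega$ bijectively onto $\widehat{\Omega}$, I would obtain
\[
\int_{\widehat{\Omega}} \phi(\rho)\, d\widehat{v} = \int_{\Omega} \phi(\sinh r)\, \sinh^n r\, \cosh r\, dr\, d\theta = \int_{\Omega} \phi(\lambda)\, \lambda'\, dv,
\]
which is exactly the claimed identity.

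Since the statement only asks for $\phi$ continuous, no convexity or parity hypothesis is needed and the whole argument is a single change of variables. The only point requiring care is the bookkeeping of the radial substitution $\rho = \sinh r$ together with the matching of integration domains under the radial-preserving diffeomorphism $\pi$; once the Jacobian factor $\cosh r = \lambda'$ is identified the two integrands coincide termwise, so I anticipate no genuine obstacle here.
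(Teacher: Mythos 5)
Your proof is correct and follows essentially the same route as the paper: both arguments express $dv = \lambda^n\,dr\,d\sigma$ and $d\widehat{v} = \rho^n\,d\rho\,d\sigma$ in polar coordinates, identify $\rho = \lambda = \sinh r$ so that $d\rho = \lambda'\,dr$, and conclude the termwise identity $\phi(\lambda)\lambda'\,dv = \phi(\rho)\,d\widehat{v}$ before integrating over the matched domains. No gaps.
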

	
	\begin{proof}
		It is easy to see that $dv= \lambda^n drd\sigma$ and $d\widehat{v} = \rho^n d\rho d\sigma$, where $d\sigma$ is the area element of unite sphere $\mathbb{S}^n$.
		Since $\rho =\abs{\pi (X)} =\lambda(r)$, we have
		\begin{equation}\label{rel-weighted vol-Euc vol}
		\phi(\lambda)\lambda' dv = \phi(\lambda)\lambda' \lambda^n dr d\sigma = \phi(\lambda)\lambda^n d\lambda d\sigma
		= \phi(\rho)\rho^n d \rho d\sigma =\phi(\rho) d \widehat{v}.
		\end{equation}
		Then we get Lemma \ref{lem-rel-weighted vol-Ecul vol} by integrating both sides of \eqref{rel-weighted vol-Euc vol} on $\Omega$ and $\widehat{\Omega}$ respectively.
	\end{proof}
	
	\begin{lem}\label{lem-nu-uhat}
		Let $\Sigma$ be a closed, $C^1$ hypersurface in $\mathbb{H}^{n+1}$.
		A unit normal vector of $\Sigma$ at $X\in \Sigma$ can be given as
		\begin{equation}\label{nu-uhat}
		\nu=\frac{(\widehat{\nu} + \widehat{u}x, \lambda' \widehat{u})}{\sqrt{\widehat{u}^2+1}},
		\end{equation}	
		where $\widehat{u} =\metric{x}{\widehat{\nu}}$ is the Euclidean support function of $\widehat{\Sigma} = \pi (\Sigma)$ at $x = \pi (X)$.
	\end{lem}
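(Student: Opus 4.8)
The plan is to verify directly that the vector field $\nu$ defined by \eqref{nu-uhat} satisfies the three defining properties of a unit normal of $\Sigma$, viewed as a hypersurface of $\mathbb{H}^{n+1} \subset \mathbb{R}^{n+1,1}$: namely (i) $\nu$ is tangent to $\mathbb{H}^{n+1}$ at $X$, i.e. $\metric{\nu}{X} = 0$ (recall that $X$ itself is the Minkowski normal to $\mathbb{H}^{n+1}$); (ii) $\nu$ is Minkowski-orthogonal to every tangent vector of $\Sigma$ at $X$; and (iii) $\nu$ is a spacelike unit vector, $\metric{\nu}{\nu} = 1$. Since these three conditions determine a normal direction up to sign and normalization, establishing them proves the claim. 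Throughout I would exploit the two identities coming from the hyperboloid constraint, $\abs{x}^2 = \lambda^2$ and $(\lambda')^2 = 1 + \lambda^2 = 1 + \abs{x}^2$, together with $\metric{\widehat{\nu}}{x} = \widehat{u}$, where on the right the inner product is the Euclidean one on $\mathbb{R}^{n+1}$.

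For (i), I would substitute $X = (x, \lambda')$ into $\metric{\nu}{X}$ and expand with the Minkowski metric: the $\mathbb{R}^{n+1}$-part of $\widehat{\nu} + \widehat{u}x$ paired with $x$ gives $\metric{\widehat{\nu}}{x} + \widehat{u}\abs{x}^2 = \widehat{u}(1 + \abs{x}^2)$, while the time-component contributes $-(\lambda'\widehat{u})\lambda' = -\widehat{u}(\lambda')^2$, and these cancel precisely because $(\lambda')^2 = 1 + \abs{x}^2$. For (iii), I would compute $\abs{\widehat{\nu} + \widehat{u}x}^2 - (\lambda'\widehat{u})^2$; using $\abs{\widehat{\nu}}^2 = 1$, $\metric{\widehat{\nu}}{x} = \widehat{u}$, and the same two constraint identities, this simplifies to $1 + \widehat{u}^2$, which cancels the normalizing factor $\widehat{u}^2 + 1$ and yields $\metric{\nu}{\nu} = 1$. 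Both of these are purely algebraic manipulations once the constraint identities are in hand.

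The key geometric step, and the one I expect to require the most care, is (ii), because it is where the relation between the hyperbolic and Euclidean pictures actually enters. Since $\mathbb{H}^{n+1}$ is the graph $x_{n+1} = \sqrt{1 + \abs{x}^2} = \lambda'$ over $\mathbb{R}^{n+1}$ and $\pi$ is a diffeomorphism, differentiating along a curve in $\Sigma$ shows that any tangent vector $W \in T_X\Sigma$ takes the form $W = \bigl(w, \tfrac{\metric{x}{w}}{\lambda'}\bigr)$, where $w = d\pi(W) \in T_x\widehat{\Sigma}$. Feeding this into $\metric{\nu}{W}$, the two terms proportional to $\widehat{u}\metric{x}{w}$ cancel, leaving $\metric{\nu}{W} = (\widehat{u}^2+1)^{-1/2}\metric{\widehat{\nu}}{w}$, which vanishes because $w$ is tangent to $\widehat{\Sigma}$ and $\widehat{\nu}$ is its Euclidean unit normal. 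The only genuine obstacle is thus getting the graph-differentiation formula for $W$ correct and keeping the Minkowski and Euclidean inner products cleanly separated; the remaining work is routine algebra, and the positive normalizing factor $\sqrt{\widehat{u}^2+1}$ fixes the sign so that $\nu$ is the outer normal.
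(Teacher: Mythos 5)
Your proposal is correct and follows essentially the same route as the paper's proof: verify $\metric{\nu}{X}=0$, $\metric{\nu}{\nu}=1$ via the identities $\abs{x}^2=\lambda^2$ and $(\lambda')^2=1+\lambda^2$, and show orthogonality to $T_X\Sigma$ by identifying tangent vectors with $\bigl(w,\metric{x}{w}/\lambda'\bigr)$ for $w\in T_x\widehat{\Sigma}$. The paper derives this tangent-vector form from the condition $\metric{X}{(e,\alpha)}=0$ for the pushforward under $\pi^{-1}$ rather than by differentiating the graph relation, but the two derivations are equivalent.
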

	
	\begin{proof}
		We only need to check the following facts: $\metric{X}{\nu} = 0$, $\metric{\nu}{\nu} =1$ and $\metric{\pi^{-1}_* e}{\nu} = 0$ for any vector $e \in T_x \widehat{\Sigma}$. Here $\pi^{-1}_* e$ denotes the pushforward vector of $e$ via map $\pi^{-1}$. The first two facts imply that $\nu$ is a unit vector in $T_X \mathbb{H}^{n+1}$. Then, the third fact confirms that  $\nu$ is a normal vector at $X \in \Sigma$.
		
		By a direct calculation, we have
		\begin{equation}\label{X,nu=0}
		\metric{X}{\nu} =\frac{ \metric{(x, \lambda')}{(\widehat{\nu} + \widehat{u}x, \lambda' \widehat{u}) }}{\sqrt{\widehat{u}^2+1}}
		=\frac{\metric{x}{\widehat{\nu}}+ \widehat{u}\abs{x}^2 -\(\lambda'\)^2 \widehat{u} }{\sqrt{\widehat{u}^2+1}} =0,
		\end{equation}
		where we used $\abs{x}^2 = \lambda^2$ and $\( \lambda'\)^2- \lambda^2=1$. Similarly, we find that 
		\begin{equation}\label{nu,nu=1}
		\metric{\nu}{\nu} = \frac{ \abs{\widehat{\nu}+ \widehat{u}x}^2- \( \lambda' \widehat{u}\)^2 }{\widehat{u}^2+1} 
		=\frac{1 + 2 \widehat{u}^2 + \lambda^2 \widehat{u}^2 - \(\lambda'\)^2 \widehat{u}^2}{\widehat{u}^2+1}
		=1.
		\end{equation}
		Since $\pi \((x, x_{n+1}) \) = x$,
		for any $e \in T_x \widehat{\Sigma}$, we can let $\pi^{-1}_* e = \( e, \alpha\)$ for some real number $\alpha$. As $\pi^{-1}_* e \in T_X \mathbb{H}^{n+1}$, we know $\metric{X}{\( e, \alpha\)} = 0$. Hence $\alpha \lambda' = \metric{x}{e}$. Therefore,  we get
		\begin{equation*}
		\pi^{-1}_* e = \(e, \frac{\metric{x}{e}}{\lambda'}\).
		\end{equation*} 
		This together with the assumption $\metric{e}{\widehat{\nu}}=0$ gives
		\begin{equation}\label{pi-1 e-nu}
		\metric{\pi^{-1}_* e}{\nu} =
		\frac{\metric{e}{\widehat{\nu}+ \widehat{u}x } -\widehat{u}\metric{x}{e}  }{\sqrt{\widehat{u}^2+1}}
		= \frac{\metric{e}{\widehat{\nu}}+\widehat{u}\metric{x}{e}- \widehat{u}\metric{x}{e}   }{\sqrt{\widehat{u}^2+1}} =0.
		\end{equation}
		Then Lemma \ref{lem-nu-uhat} follows from \eqref{X,nu=0},  \eqref{nu,nu=1} and \eqref{pi-1 e-nu}.
	\end{proof}
	
	In the next lemma, we transform the weighted area of $\Sigma$ to an integral on $\widehat{\Sigma}$. We use $\lrcorner$ to denote the interior multiplication. We refer to Lee's book \cite{Lee03} for common properties of the interior multiplication.
	\begin{lem}\label{lem-rel-weighted area-integral in Eucl}
		Let $\Sigma$ be a closed, $C^1$ hypersurface in $\mathbb{H}^{n+1}$.
		Let $\phi$ be any continuous function on $\mathbb{R}$.
		It holds that
		\begin{equation}\label{rel-weighted area-integral in Eucl}
		\int_\Sigma \phi(\lambda) \lambda' d\mu = \int_{\widehat{\Sigma}} \phi(\rho) \sqrt{\widehat{u}^2 +1} d\widehat{\mu}.
		\end{equation}
	\end{lem}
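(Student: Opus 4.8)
The plan is to exploit that the projection $\pi\cn\H^{n+1}\to\R^{n+1}$ is a diffeomorphism and to rewrite the integral over $\widehat\Sigma$ as an integral over $\Sigma$ by pullback, so that the whole problem reduces to identifying the Jacobian factor relating $d\widehat\mu$ and $d\mu$. Since both area elements come from contracting the ambient volume form with the unit normal, I would write $d\mu=(\nu\lrcorner dv)|_\Sigma$ and $d\widehat\mu=(\widehat\nu\lrcorner d\widehat v)|_{\widehat\Sigma}$, so that $\int_{\widehat\Sigma}\phi(\rho)\sqrt{\widehat u^2+1}\,d\widehat\mu=\int_\Sigma\pi^*\!\big(\phi(\rho)\sqrt{\widehat u^2+1}\,(\widehat\nu\lrcorner d\widehat v)\big)$. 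Because $\rho\circ\pi=\lambda$, the scalar factors pull back trivially, and the entire task becomes the computation of $\pi^*(\widehat\nu\lrcorner d\widehat v)\big|_\Sigma$.

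To carry this out I would first record the two ingredients coming from $\pi$. From the coordinate expressions $dv=\lambda^n\,dr\,d\sigma$ and $d\widehat v=\rho^n\,d\rho\,d\sigma$ together with $\rho=\lambda(r)$, exactly as in the proof of Lemma \ref{lem-rel-weighted vol-Ecul vol}, one gets $\pi^* d\widehat v=\lambda'\,dv$. Next, writing $\pi^{-1}(x)=(x,\sqrt{\abs{x}^2+1})$ and differentiating, the pushforward of the Euclidean normal is $\pi^{-1}_*\widehat\nu=\big(\widehat\nu,\ \widehat u/\lambda'\big)$, which one checks is tangent to $\H^{n+1}$ (using $\metric{x}{\widehat\nu}=\widehat u$). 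Applying the naturality of the interior product under the diffeomorphism $\pi$, namely $\pi^*(\widehat\nu\lrcorner d\widehat v)=(\pi^{-1}_*\widehat\nu)\lrcorner(\pi^* d\widehat v)=\lambda'\,\big((\pi^{-1}_*\widehat\nu)\lrcorner dv\big)$, I reduce the problem to contracting the intrinsic volume form $dv$ with an explicit ambient vector field.

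The final step is the elementary identity that, for any vector field $W$ tangent to $\H^{n+1}$, one has $(W\lrcorner dv)|_\Sigma=\metric{W}{\nu}\,d\mu$, since only the $\nu$-component of $W$ survives restriction to $T\Sigma$. Feeding in $W=\pi^{-1}_*\widehat\nu$ together with the formula for $\nu$ from Lemma \ref{lem-nu-uhat}, a short Minkowski inner-product computation gives $\metric{\pi^{-1}_*\widehat\nu}{\nu}=1/\sqrt{\widehat u^2+1}$, where the $\widehat u$-cross terms cancel exactly on account of $\metric{x}{\widehat\nu}=\widehat u$ and $(\lambda')^2-\lambda^2=1$. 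Combining the pieces yields $\pi^*(d\widehat\mu)|_\Sigma=\tfrac{\lambda'}{\sqrt{\widehat u^2+1}}\,d\mu$, and substituting back produces precisely $\int_\Sigma\phi(\lambda)\lambda'\,d\mu$, as claimed.

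I expect the main obstacle to be bookkeeping rather than conceptual: applying the naturality relation $\pi^*(W\lrcorner\alpha)=(\pi^{-1}_*W)\lrcorner\pi^*\alpha$ in the correct direction, and consistently using the \emph{Minkowski} metric throughout the contraction identity $(W\lrcorner dv)|_\Sigma=\metric{W}{\nu}\,d\mu$. One must also verify that $\pi^{-1}_*\widehat\nu$ genuinely lies in $T\H^{n+1}$, so that contracting the intrinsic volume form $dv$ is legitimate, and that orientations are chosen so that both area elements are the intended positive densities; the positivity of the Jacobian $\lambda'/\sqrt{\widehat u^2+1}$ confirms this consistency.
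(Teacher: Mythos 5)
Your proof is correct and is essentially the paper's own argument run in the reverse direction: the paper contracts the weighted hyperbolic volume form with $\nu$ and pushes forward under $\pi$, computing $\metric{\pi_*\nu}{\widehat{\nu}}=\sqrt{\widehat{u}^2+1}$ on the Euclidean side, whereas you pull back $\widehat{\nu}\lrcorner d\widehat{v}$ to $\Sigma$ and compute the reciprocal factor $\metric{\pi^{-1}_*\widehat{\nu}}{\nu}=1/\sqrt{\widehat{u}^2+1}$ together with $\pi^*d\widehat{v}=\lambda'\,dv$. Both versions rest on exactly the same ingredients — the interior-product description of the area element, naturality of $\lrcorner$ under the diffeomorphism $\pi$, the Jacobian identity of Lemma \ref{lem-rel-weighted vol-Ecul vol}, and the normal formula of Lemma \ref{lem-nu-uhat} — so the two computations are equivalent.
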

	\begin{proof}
		Using \eqref{nu-uhat}, we have 
		\begin{equation}\label{pi-nu nu}
		\metric{\pi_* \nu}{\widehat{\nu}} = \frac{\metric{\widehat{\nu}+ \widehat{u} x}{\widehat{\nu}}}{\sqrt{\widehat{u}^2+1}} = \sqrt{\widehat{u}^2+1}.
		\end{equation}
		Using \eqref{rel-weighted vol-Euc vol} and \eqref{pi-nu nu}, we obtain
		\begin{equation}\label{loc-rel-weighted area-integral in Eucl}
		\begin{aligned}
		\phi(\lambda) \lambda' d\mu=&  \nu \lrcorner\(\phi(\lambda)\lambda' dv\)\\
		=&  \(\pi_* \nu\) \lrcorner \(\phi(\rho)d \widehat{v} \)\\
		=&\phi(\rho) \metric{\pi_* \nu }{\widehat{\nu}} d \widehat{\mu}\\
		=& \phi(\rho) \sqrt{\widehat{u}^2+1} d\widehat{\mu}.
		\end{aligned}
		\end{equation}
		Then Lemma \ref{lem-rel-weighted area-integral in Eucl} follows by integrating both sides of \eqref{loc-rel-weighted area-integral in Eucl} on $\Sigma$ and $\widehat{\Sigma}$ respectively.
	\end{proof}
	
	\begin{lem}\label{lem-sym of phi}
		Let $\Omega$ be a bounded domain in $\mathbb{H}^{n+1}$. Let $\phi(t)$ be a function that satisfies the  assumptions in Theorem \ref{thm-general weighted iso ineq}. Then 
		\begin{equation}\label{sym of phi}
		\int_\Omega \phi'(\lambda) \lambda' \lambda dv \geq \eta \(\int_\Omega \phi(\lambda) \lambda' dv \),
		\end{equation}
		where 
		\begin{equation}\label{def-eta}
		\eta \( \omega_n \int_0^t \phi(s) s^n ds \) = \omega_n \int_0^t \phi'(s)s^{n+1} ds, \quad \forall \ t \geq 0.
		\end{equation}
		If $\phi(\lambda(r))$ is not constant on $\Omega$, then equality holds in \eqref{sym of phi} if and only if $\Omega$ is a geodesic ball centered at the origin. 
	\end{lem}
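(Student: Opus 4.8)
The plan is to reduce \eqref{sym of phi} to a one-dimensional comparison in the radial variable and then run a bathtub-principle argument driven by the monotonicity of $s(\log\phi)'(s)$. First I would pass to hyperbolic polar coordinates centred at the origin, writing $X=(\sinh r\,\theta,\cosh r)$ so that $dv=\sinh^n r\,dr\,d\sigma$ with $d\sigma$ the area element of $\mathbb{S}^n$. For $s\ge 0$ set $A(s):=\mathcal{H}^n\big(\{\theta\in\mathbb{S}^n:\text{the point in direction }\theta\text{ with }\sinh r=s\text{ lies in }\Omega\}\big)$, so $0\le A(s)\le \omega_n$. Substituting $s=\sinh r=\lambda$, $ds=\cosh r\,dr=\lambda'\,dr$, Fubini gives
\begin{equation*}
\int_\Omega \phi(\lambda)\lambda'\,dv=\int_0^\infty \phi(s)\,s^n A(s)\,ds,\qquad \int_\Omega \phi'(\lambda)\lambda'\lambda\,dv=\int_0^\infty \phi'(s)\,s^{n+1}A(s)\,ds.
\end{equation*}
I would then set $F(t)=\omega_n\int_0^t\phi(s)s^n\,ds$ and $G(t)=\omega_n\int_0^t\phi'(s)s^{n+1}\,ds$, so that $\eta=G\circ F^{-1}$ (here $F$ is strictly increasing since $\phi>0$, hence invertible on its range), and abbreviate $V:=\int_\Omega\phi(\lambda)\lambda'\,dv$ and $R:=F^{-1}(V)$.

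The key monotonicity comes next. Set $\chi(s):=s\,\phi'(s)/\phi(s)=s(\log\phi)'(s)$. Since $\phi$ is even and positive, $(\log\phi)'$ is odd, so $(\log\phi)'(0)=0$; and $(\log\phi)''\ge 0$ makes $(\log\phi)'$ non-decreasing, hence $(\log\phi)'\ge 0$ on $[0,\infty)$. Therefore $\chi(0)=0$ and $\chi'(s)=(\log\phi)'(s)+s(\log\phi)''(s)\ge 0$ for $s\ge 0$, so $\chi$ is non-decreasing on $[0,\infty)$. A direct computation also gives $G'(s)=\chi(s)F'(s)$.

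Now the comparison. Put $H(s):=\int_0^s\phi(\tau)\tau^n A(\tau)\,d\tau$, so $H'=\phi\, s^n A$ with $0\le H'\le F'$, $H(0)=0$ and $H(\infty)=V=F(R)$. Using $G'=\chi F'$ together with $\eta(V)=G(R)=\int_0^R\chi F'\,ds$,
\begin{align*}
\int_\Omega \phi'(\lambda)\lambda'\lambda\,dv-\eta(V)
&=\int_0^\infty \chi(s)H'(s)\,ds-\int_0^R \chi(s)F'(s)\,ds\\
&=\int_0^R \chi(s)\big(H'(s)-F'(s)\big)\,ds+\int_R^\infty \chi(s)H'(s)\,ds.
\end{align*}
On $[0,R]$ one has $H'-F'\le 0$ and $\chi(s)\le\chi(R)$, so the first integrand is $\ge\chi(R)(H'-F')$; on $[R,\infty)$ one has $H'\ge 0$ and $\chi(s)\ge\chi(R)$, so the second is $\ge\chi(R)H'$. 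Hence the difference is $\ge\chi(R)\big(\int_0^\infty H'-\int_0^R F'\big)=\chi(R)(V-V)=0$, which is exactly \eqref{sym of phi}.

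Finally the equality case, which I expect to be the most delicate step. Tracking the two estimates, equality forces $(\chi(R)-\chi(s))(F'(s)-H'(s))=0$ a.e.\ on $[0,R]$ and $(\chi(s)-\chi(R))H'(s)=0$ a.e.\ on $[R,\infty)$. The plan is to use the hypothesis that $\phi(\lambda)$ is non-constant on $\Omega$ to rule out the degenerate possibility that $\chi$ is locally constant across the radii occupied by $\Omega$: non-constancy upgrades the monotonicity of $\chi=s(\log\phi)'$ to strict monotonicity where it matters, which then forces $A(s)=\omega_n$ for $s<R$ and $A(s)=0$ for $s>R$, i.e.\ $\Omega$ coincides up to a null set with the geodesic ball $B(\sinh^{-1}R)$ centred at the origin. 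The converse is immediate, since for such a ball $H'=F'$ on $[0,R]$ and $H'=0$ on $[R,\infty)$, so both one-signed integrals vanish. The inequality itself is thus a clean rearrangement once the monotonicity of $\chi$ is in hand; the care is entirely in showing that any ``leakage'' of $A$ away from the ball configuration strictly raises the left-hand side under the non-constancy assumption.
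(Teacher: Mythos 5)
Your derivation of the inequality \eqref{sym of phi} is correct, and it is in essence the paper's own argument: the paper runs the same mass-balance symmetrization with the monotone weight $\chi(s)=s\phi'(s)/\phi(s)$, splitting $\Omega$ along the ball $B(r_0)$ rather than reducing to one dimension through the distribution function $A(s)$; your monotonicity of $\chi$ and your radius $R$ play exactly the roles of the paper's \eqref{mono of phi} and $r_0$.

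The equality characterization, however, has a genuine gap, and the plan you sketch for it cannot work as stated. You defer everything to the claim that non-constancy of $\phi(\lambda)$ on $\Omega$ ``upgrades the monotonicity of $\chi$ to strict monotonicity where it matters.'' But non-constancy of $\phi(\lambda)$ on $\Omega$ is a hypothesis about the domain and cannot influence the fixed function $\chi$; and without further input your two a.e.\ conditions do not force $\Omega$ to be a ball. Indeed, suppose $\chi$ were constant, equal to some $c>0$, on an interval $[s_1,s_2]$ containing $R$. Then any domain that fills $\{\lambda<s_1\}$, avoids $\{\lambda>s_2\}$, and is arbitrary (subject to mass balance) in between satisfies both of your a.e.\ conditions, hence achieves equality, while $\phi(\lambda)$ — which behaves like $s^c$ on $[s_1,s_2]$ — is non-constant on such a domain. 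So your conditions alone are compatible with non-spherical equality domains; something must rule out flat pieces of $\chi$ at positive height.

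The missing ingredient is a structural fact that follows from log-convexity and is the actual content of the equality case: $\chi$ cannot be constant on an interval at a positive value. If $\chi\equiv c$ on $[s_1,s_2]$ with $s_1>0$, then $(\log\phi)'(s)=c/s$ there, so $(\log\phi)''(s)=-c/s^{2}$, and $(\log\phi)''\ge 0$ forces $c\le 0$, hence $c=0$ since $\chi\ge 0$. Consequently $\{\chi=0\}$ is an initial interval $[0,s_*]$ on which $\phi$ is constant, and $\chi$ is strictly increasing on $[s_*,\infty)$. With this dichotomy your argument closes: if $R>s_*$, then $\chi(s)<\chi(R)$ for all $s<R$ and $\chi(s)>\chi(R)$ for all $s>R$, so your conditions give $A=\omega_n$ a.e.\ on $[0,R)$ and $A=0$ a.e.\ on $(R,\infty)$, i.e.\ $\Omega$ is the ball (up to the usual null-set/openness identification, which the paper also elides); if instead $R\le s_*$, then $\chi(R)=0$ and the second condition gives $A=0$ a.e.\ on $(s_*,\infty)$, so the open set $\Omega$ lies in $\{\lambda\le s_*\}$, where $\phi$ is constant — contradicting the non-constancy hypothesis. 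This computation is short, but without it the equality statement is unproved.
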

	
	\begin{proof}
		We set $dm = \phi(\lambda) \lambda' dv$. Recall that $B(r)$ denotes the geodesic ball of radius $r$ centered at the origin in $\mathbb{H}^{n+1}$.
		Let $r_0$ satisfy $\int_{B(r_0)} dm =\int_\Omega dm$. 
		Hence,
		\begin{equation}\label{assmp-r_0}
		\int_{\Omega / B(r_0)} dm = \int_{ B(r_0)/\Omega } dm.
		\end{equation}
		Using \eqref{def-eta}, we have
		\begin{equation}\label{eta for ball}
		\begin{aligned}
		\eta\(\int_\Omega \phi(\lambda) \lambda' dv\) 
		=&\eta\(\int_{B(r_0)} dm\)\\
		=&\eta \( \omega_n \int_0^{\lambda(r_0)} \phi(\lambda) \lambda^n d\lambda \)\\
		=&\omega_n \int_0^{\lambda{(r_0)}} \phi'(\lambda) \lambda^{n+1} d\lambda\\
		=&\int_{B(r_0)}  \frac{\phi'(\lambda) \lambda}{\phi(\lambda)} dm.
		\end{aligned}
		\end{equation}
		Since $\phi$ is even on $\mathbb{R}$, we have $\phi'(0) = 0$. Then the assumption $\(\log \phi\)'' \geq 0$ directly implies that $\(\log \phi\)' \geq 0$ on $[0, +\infty)$. 
		Then,
		\begin{equation}\label{mono of phi}
		\(\frac{\phi'(t) t}{\phi(t)}\)'=\( \log \phi \)''(t) t + (\log \phi)'(t) \geq 0.
		\end{equation}
		Now we can apply the symmetrization method to get
		\begin{equation} \label{ineq-sym}
		\begin{aligned}
		\int_\Omega \phi'(\lambda) \lambda' \lambda dv=&
		\int_\Omega \frac{\phi'(\lambda) \lambda}{\phi(\lambda)} dm \\
		=& \int_{\Omega \cap  B (r_0)} \frac{\phi'(\lambda) \lambda}{\phi(\lambda)} dm  + \int_{\Omega / B(r_0)} \frac{\phi'(\lambda) \lambda}{\phi(\lambda)} dm \\
		\geq& \int_{\Omega \cap  B (r_0)} \frac{\phi'(\lambda) \lambda}{\phi(\lambda)} dm + \frac
		{\phi'(\lambda(r_0)) \lambda(r_0)}{\phi(\lambda(r_0))}\int_{\Omega / B(r_0)} dm \\
		=& \int_{ B (r_0) \cap \Omega} \frac{\phi'(\lambda) \lambda}{\phi(\lambda)} dm + \frac
		{\phi'(\lambda(r_0))\lambda(r_0)}{\phi(r_0)}\int_{ B(r_0)/\Omega } dm \\
		\geq& \int_{B(r_0)}  \frac{\phi'(\lambda) \lambda}{\phi(\lambda)} dm \\
		=&\eta \(\int_\Omega \phi(\lambda) \lambda' dv \), 
		\end{aligned}
		\end{equation}
		where we used \eqref{mono of phi}, \eqref{assmp-r_0} and \eqref{eta for ball}. Thus we  proved \eqref{sym of phi}.
		If $\phi(\lambda)$ is not constant in $\Omega$, then we know that \eqref{mono of phi} is strict on some interval $[r_1, r_2]$ such that $0 \leq r_1 <r_2$ and $\int_{\Omega / B(r_2)} dm >0$. In the case that equality holds in \eqref{sym of phi}, we first assume that $\Omega$ is not $B(r_0)$. However, if equality holds in \eqref{ineq-sym}, then $r_2 \leq r_0$ and $r_1 \geq r_0$. That is a contradiction. Hence, in the case that $\phi(\lambda)$ is not constant in $\Omega$, equality holds in \eqref{sym of phi} if and only if $\Omega$ is a geodesic ball centered at the origin. 
		We complete the proof of Lemma \ref{lem-sym of phi}.
	\end{proof}
	
	\begin{rem}
		The key step in the proof of Lemma \ref{sym of phi} is  \eqref{mono of phi}. Similar arguments can be applied to many other comparisons of volumes with different weights. 
	\end{rem}
	
	Chambers \cite{Cha19} proved the following isoperimetric inequality with log-convex density. For convenience, we will use the settings and notations in this paper.
	\begin{thmC}[\cite{Cha19}]
		Let $\widehat{\Omega}$ be a bounded domain in $\mathbb{R}^{n+1}$ with $C^1$ boundary $\widehat{\Sigma}$.
		Let $\phi$ be a smooth, positive, even function on $\mathbb{R}$ that satisfies $\(\log \phi\)'' \geq 0$.
		Then
		\begin{equation*} 
		\int_{\widehat{\Sigma}} \phi(\rho) d\widehat{\mu} \geq \xi \( \int_{\widehat{\Omega}} \phi(\rho) d\widehat{v}\),
		\end{equation*}
		where $\rho$ denotes the radial function of $\widehat{\Sigma}$, and $\xi$ satisfies
		\begin{equation}\label{def-xi}
		\xi \( \omega_n \int_0^t \phi(s) s^n ds\) = \omega_n \phi(t) t^n, \quad \forall \ t \geq 0.
		\end{equation}
	\end{thmC}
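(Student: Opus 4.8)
The statement is precisely the \emph{log-convex density conjecture} for radial densities: among all sets of a prescribed $\phi$-weighted volume $\int_{\widehat\Omega}\phi(\rho)\,d\widehat v$, the origin-centered ball minimizes the $\phi$-weighted perimeter $\int_{\widehat\Sigma}\phi(\rho)\,d\widehat\mu$, and the profile $\xi$ defined in \eqref{def-xi} simply records the perimeter-versus-volume relation of these balls. My plan is the variational route: produce a minimizer, symmetrize it using the radial symmetry of the density, reduce to a one-dimensional problem for a generating curve, and finally identify that curve as an origin-centered circular arc.

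First I would establish existence of a $\phi$-weighted isoperimetric region of any prescribed weighted volume. The hypotheses are favorable here: since $\phi$ is even with $(\log\phi)''\geq 0$, the argument in the proof of Lemma~\ref{lem-sym of phi} gives $(\log\phi)'\geq 0$ on $[0,\infty)$, so $\phi(\rho)$ is nondecreasing in $\rho$. This radial monotonicity keeps mass from escaping to infinity, and the standard compactness and lower-semicontinuity machinery of geometric measure theory then yields a minimizer whose reduced boundary is a smooth hypersurface of constant $\phi$-weighted mean curvature away from a singular set of high codimension.

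Next I would exploit the rotational invariance of the density. Because $\phi$ depends only on $\rho=\abs{x}$, spherical (Schwarz) symmetrization about any line through the origin preserves the weighted volume and does not increase the weighted perimeter; iterating reduces the problem to a region of revolution about a single axis through the origin. Such a region is encoded by a generating curve $\gamma$ in a half-plane, and both $\int_{\widehat\Sigma}\phi\,d\widehat\mu$ and $\int_{\widehat\Omega}\phi\,d\widehat v$ become $\phi$-weighted integrals along and under $\gamma$. The Euler--Lagrange equation for the reduced functional is the weighted constant-mean-curvature condition that
\begin{equation*}
H-(\log\phi)'(\rho)\,\frac{\widehat u}{\rho}
\end{equation*}
be constant along $\gamma$, which one checks is satisfied by the centered sphere $\{\rho=t\}$ with value $\tfrac{n}{t}-(\log\phi)'(t)$.

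The main obstacle, and the genuine content of the theorem, is to prove that the \emph{only} minimizing generating curve is the one giving an origin-centered sphere. Note that the weighted equation has no translation symmetry, so Alexandrov's moving-plane argument for ordinary CMC hypersurfaces does not apply, and the difficulty is exactly what forced earlier results to impose extra hypotheses beyond log-convexity (cf.\ the stability discussion in \cite{RCBM08}). I would attack it by tracking a carefully chosen quantity along $\gamma$ whose monotonicity is forced by the convexity $(\log\phi)''\geq 0$, the same inequality that drives \eqref{mono of phi}, and using it to rule out off-center and unduloid-type critical curves, leaving ODE uniqueness to pin $\gamma$ down as the centered circular arc. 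Propagating the resulting sharp profile back through the symmetrization inequalities then yields \eqref{def-xi}. This last step is the delicate heart of Chambers's argument \cite{Cha19}, and it is this full resolution that the present paper invokes.
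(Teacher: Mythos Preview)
The paper does not give its own proof of Theorem~C. It is stated as a quoted result of Chambers \cite{Cha19} and then used as a black box in the proof of Theorem~\ref{thm-general weighted iso ineq} (specifically in the second inequality of \eqref{last ineq Eucl space}) and again, via Lemma~\ref{lem-ThmC-modified}, in the proof of Theorem~\ref{thm-iso-warped}. So there is nothing in the paper to compare your argument against; you have correctly identified this yourself in your final sentence.

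As a sketch of Chambers's actual proof your outline is structurally accurate: existence via GMT compactness (using that $\phi$ is radially nondecreasing, which you correctly derive from evenness and log-convexity), reduction to a surface of revolution by spherical symmetrization, and then an ODE analysis of the generating curve. Your third step is honestly labeled as a plan rather than a proof: the phrase ``tracking a carefully chosen quantity along $\gamma$ whose monotonicity is forced by $(\log\phi)''\geq 0$'' is a placeholder for the substantial comparison argument in \cite{Cha19}, where the generating curve is compared to a tangent circle centered on the axis of revolution and a delicate curvature-comparison forces the curve to close up as that circle. You acknowledge this, so there is no gap in the sense of a wrong claim, but one should not mistake the outline for a proof.
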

	
	Now we give the proof of Theorem \ref{thm-general weighted iso ineq}.
	\begin{proof}[Proof of Theorem \ref{thm-general weighted iso ineq}]
		It is simple to see that $\chi(t) = \sqrt{t^2+1}$ is strictly convex on $\mathbb{R}$. Then Jensen's inequality for integrals implies that
		\begin{equation}\label{Jensen ineq}
		\frac{\int_{\widehat{\Sigma}} \phi(\rho)\sqrt{\widehat{u}^2+1} d\widehat{\mu}}{\int_{\widehat{\Sigma}} \phi(\rho) d \widehat{\mu}}
		\geq \sqrt{  \(\frac{\int_{\widehat{\Sigma}} \phi(\rho)\widehat{u} d\widehat{\mu}}{\int_{\widehat{\Sigma}} \phi(\rho) d \widehat{\mu}}\)^2+1  }.
		\end{equation}
		By the divergence theorem, we know that 
		\begin{equation*}
		\int_{\widehat{\Sigma}} \phi(\rho) \widehat{u} d\widehat{\mu}
		= \int_{\widehat{\Omega}} \divv\( \phi(\rho) x \) d\widehat{v}
		= (n+1)\int_{\widehat{\Omega}} \phi(\rho) d \widehat{v} + \int_{\widehat{\Omega}} \phi'(\rho) \rho d \widehat{v}.
		\end{equation*}
		Then we can simplify \eqref{Jensen ineq} to get
		\begin{equation}\label{last ineq Eucl space}
		\begin{aligned}
		\(\int_{\widehat{\Sigma}} \phi(\rho)\sqrt{\widehat{u}^2+1} d\widehat{\mu}\)^2
		\geq& \(  (n+1)\int_{\widehat{\Omega}} \phi(\rho) d \widehat{v} + \int_{\widehat{\Omega}} \phi'(\rho) \rho d \widehat{v} \)^2 +\(\int_{\widehat{\Sigma}} \phi(\rho) d \widehat{\mu} \)^2 \\
		\geq& \(  (n+1)\int_{\widehat{\Omega}} \phi(\rho) d \widehat{v} + \int_{\widehat{\Omega}} \phi'(\rho) \rho d \widehat{v} \)^2 +\xi\(\int_{\widehat{\Omega}} \phi(\rho) d \widehat{v} \)^2, 
		\end{aligned}
		\end{equation}
		where we used Theorem C in the second inequality. 
		By using \eqref{rel-weighted area-integral in Eucl} and \eqref{rel-weighted vol-Ecul vol}  in \eqref{last ineq Eucl space}, we obtain
		\begin{equation}\label{iso-eta-xi}
		\begin{aligned}
		\(\int_{\Sigma} \phi(\lambda) \lambda' d\mu\)^2 \geq&
		\(  (n+1) \int_{\Omega}\phi(\lambda) \lambda' dv + \int_{\Omega} \phi'(\lambda) \lambda' \lambda dv \)^2 + \xi \(\int_\Omega \phi(\lambda) \lambda'dv\)^2  \\
		\geq&
		\(  (n+1) \int_{\Omega}\phi(\lambda) \lambda' dv + \eta\(\int_{\Omega} \phi(\lambda) \lambda'  dv\)  \)^2 + \xi \(\int_\Omega \phi(\lambda) \lambda'dv\)^2,
		\end{aligned}
		\end{equation}
		where we used \eqref{sym of phi} in the second inequality.
		Let $t_0$ satisfy $\int_{\Omega} \phi(\lambda) \lambda' dv = \omega_n \int_0^{t_0} \phi(s) s^n ds$. Using \eqref{def-eta} and \eqref{def-xi} in \eqref{iso-eta-xi}, we have
		\begin{equation*}
		\begin{aligned}
		\(\int_{\Sigma} \phi(\lambda) \lambda' d\mu\)^2 \geq&
		\(\omega_n \(\int_0^{t_0} (n+1)\phi(s)s^n ds\) + \omega_n \int_0^{t_0} \phi'(s)s^{n+1} ds \)^2 + \( \omega_n \phi(t_0) t_0^n \)^2 \\
		=&  \( \omega_n \phi(t_0) t_0^{n+1}\)^2 + \(\omega_n \phi(t_0)t_0^n \)^2 \\
		=& \(\omega_n \phi(t_0) t_0^n\sqrt{t_0^2+1} \)^2\\
		=& \psi \(\omega_n \int_0^{t_0} \phi(s)s^n ds\)^2 \\
		=& \psi \(  \int_{\Omega} \phi(\lambda) \lambda' dv\)^2,
		\end{aligned}
		\end{equation*}
		where we used \eqref{def-psi} in the third equality. Thus we proved \eqref{main ineq}.
		However, equality holds in \eqref{Jensen ineq} if and only if $\widehat{u}$ is constant on $\widehat{\Sigma}$. At the critical points of $|x|^2$ on $\widehat{\Sigma}$, we have $\abs{x} = \abs{\widehat{u}}$. Hence $\abs{x}$ is constant on $\widehat{\Sigma}$, which implies that $\widehat{\Omega}$ is a geodesic ball centered at the origin. 
		Hence, if equality holds in \eqref{main ineq}, then $\widehat{\Omega} \subset \mathbb{R}^{n+1}$ is a geodesic ball centered at the origin. In this case, $\Omega \subset \mathbb{H}^{n+1}$ is a geodesic ball centered at the origin. Conversely, it is easy to show that equality holds in \eqref{main ineq} when $\Omega$ is a geodesic ball centered at the origin.
		We complete the proof of Theorem \ref{thm-general weighted iso ineq}.
	\end{proof}
	
	In the following Lemma \ref{lem-ineq-weighted vol-cano col}, we study the comparison between the weighted volume and the canonical volume of domains in $\mathbb{H}^{n+1}$. Although the proof is similar to that of Lemma \ref{lem-sym of phi}, it is interesting in its own right. For convenience, we set $\Vol_w(\Omega) : =\int_{\Omega} \lambda' dv$ and $r_\Omega := \tilde{f}_0^{-1} \(\Vol(\Omega)\)$ for a domain $\Omega \subset \mathbb{H}^{n+1}$. Recall that $\tilde{f}_0$ was defined in Section \ref{Sec-1}.
	\begin{lem}\label{lem-ineq-weighted vol-cano col}
		Let $\Omega$ be a bounded domain in $\mathbb{H}^{n+1}$, then 
		\begin{equation*}
		\int_{\Omega} \lambda' dv \geq \frac{\omega_n}{n+1} \lambda^{n+1} (r_\Omega).
		\end{equation*}
		Equality holds if and only if $\Omega$ is a geodesic ball centered at the origin.
	\end{lem}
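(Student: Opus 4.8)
The plan is to mimic the symmetrization argument of Lemma~\ref{lem-sym of phi}, taking advantage of the fact that here the weight $\lambda' = \cosh r$ is itself radially monotone, which makes the present situation even simpler.

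First I would rewrite the right-hand side as the weighted volume of the comparison ball. Since $\lambda' \lambda^n = \cosh r \sinh^n r = \frac{1}{n+1}\left(\sinh^{n+1} r\right)'$, a direct computation in polar coordinates gives
\[
\int_{B(r)} \lambda' dv = \omega_n \int_0^r \cosh t \sinh^n t\, dt = \frac{\omega_n}{n+1}\lambda^{n+1}(r), \qquad \forall\ r \geq 0.
\]
By the definition $r_\Omega = \tilde{f}_0^{-1}(\Vol(\Omega))$ we have $\Vol(B(r_\Omega)) = \Vol(\Omega)$, so the claimed inequality is exactly $\Vol_w(\Omega) \geq \Vol_w(B(r_\Omega))$; that is, among all domains of a fixed canonical volume, the centered geodesic ball minimizes the weighted volume $\int \cosh r\, dv$. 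This is plausible because $\cosh r$ is increasing in $r$, so replacing $\Omega$ by the ball moves mass toward the origin where the weight is smallest.

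Write $r_0 = r_\Omega$, so that $\int_{\Omega} dv = \int_{B(r_0)} dv$ and hence $\int_{\Omega \setminus B(r_0)} dv = \int_{B(r_0)\setminus\Omega} dv$. I would then split
\[
\int_\Omega \cosh r\, dv - \int_{B(r_0)} \cosh r\, dv = \int_{\Omega\setminus B(r_0)} \cosh r\, dv - \int_{B(r_0)\setminus\Omega}\cosh r\, dv.
\]
On $\Omega\setminus B(r_0)$ one has $r \geq r_0$, so $\cosh r \geq \cosh r_0$, while on $B(r_0)\setminus\Omega$ one has $r \leq r_0$, so $\cosh r \leq \cosh r_0$. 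Bounding each integrand by $\cosh r_0$ and invoking the equal-volume identity above makes the right-hand side nonnegative, which yields the inequality. This step replaces the monotonicity computation \eqref{mono of phi} of Lemma~\ref{lem-sym of phi}: here it is simply the elementary fact that $\cosh$ is increasing on $[0,+\infty)$.

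For the equality case, since $\cosh$ is \emph{strictly} increasing on $[0,+\infty)$, equality forces $\cosh r = \cosh r_0$ almost everywhere on both $\Omega\setminus B(r_0)$ and $B(r_0)\setminus\Omega$; hence these symmetric-difference sets have measure zero and $\Omega$ agrees with $B(r_0)$ up to a null set, i.e.\ $\Omega$ is the centered geodesic ball. The only point to verify carefully is the equal-volume identity for the symmetric difference together with the sign of the two integrands; there is no genuine obstacle here, because the monotonicity required by the symmetrization is automatic for this particular weight, so the argument is strictly shorter than that of Lemma~\ref{lem-sym of phi}.
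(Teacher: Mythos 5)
Your proposal is correct and is essentially the same argument as the paper's: the paper also reduces the statement to $\Vol_w(\Omega)\geq \Vol_w(B(r_\Omega))$, uses the equal-volume identity $\Vol(\Omega\setminus B(r_\Omega))=\Vol(B(r_\Omega)\setminus\Omega)$, and bounds $\cosh r$ above/below by $\cosh r_\Omega$ on the two pieces of the symmetric difference, exactly as you do (the paper writes the chain as $\Vol_w(\Omega)\geq \Vol_w(\Omega\cap B(r_\Omega))+\lambda'(r_\Omega)\Vol(\Omega\setminus B(r_\Omega))\geq \Vol_w(B(r_\Omega))$, which is your decomposition with the common part added back in). Your treatment of the equality case via strict monotonicity of $\cosh$ is also the intended one, and in fact spells out more detail than the paper does.
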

	\begin{proof}
		It is easy to see that $\Vol \(\Omega / B (r_\Omega) \) = \Vol \(B(r_\Omega) /\Omega \)$ by the definition of $r_\Omega$. Since $\lambda'(r) = \cosh r$ is strictly increasing in $r$ for $r>0$, we have
		\begin{equation*}
		\begin{aligned}
		\Vol_w(\Omega) =& \Vol_w \(\Omega \cap  B (r_\Omega) \) + \Vol_w \(\Omega / B(r_\Omega) \)\\
		\geq& \Vol_w \(\Omega \cap  B (r_\Omega) \)+ \lambda' (r_\Omega) \Vol\( \Omega / B(r_\Omega) \)\\
		\geq& \Vol_w \( B(r_\Omega)\)\\
		=& \frac{\omega_n}{n+1} \lambda^{n+1} (r_\Omega) .
		\end{aligned}
		\end{equation*}	
		Equality holds if and only if $\Omega$ is a geodesic ball centered at the origin.
	\end{proof}
	
	We are now in a position to prove Corollary \ref{cor-k=0 case of HLW} and Corollary \ref{cor-another app}. Recall that Corollary \ref{cor-weighted-iso-ineq} follows from Theorem \ref{thm-general weighted iso ineq} directly. 
	\begin{proof}[Proof of Corollary \ref{cor-k=0 case of HLW}]
		Using Corollary \ref{cor-weighted-iso-ineq} and the divergence theorem, we have
		\begin{equation}\label{poly-Vw}
		\begin{aligned}
		&\int_{\partial \Omega} (\cosh r- u) d\mu
		= \int_{\partial \Omega} \cosh r d\mu - (n+1) \Vol_w(\Omega) \\
		\geq& 
		\( \( (n+1)\Vol_w(\Omega) \)^2+
		\omega_n^{ \frac{2}{n+1}} \( (n+1)\Vol_w(\Omega) \)^{\frac{2n}{n+1}}
		\)^{\frac{1}{2}} - (n+1) \Vol_w(\Omega).
		\end{aligned}
		\end{equation}
		It is easy to see that the right hand side of \eqref{poly-Vw} is monotone increasing as a single-variable function of  $\Vol_w(\Omega)$. Therefore, by using  Lemma \ref{lem-ineq-weighted vol-cano col}, we have
		\begin{equation*}
		\begin{aligned}
		\int_{\partial \Omega} (\cosh r -u) d\mu
		\geq& \(  \(\omega_n \sinh^{n+1} (r_\Omega) \)^2 + \omega_n^{\frac{2}{n+1} } \(\omega_n \sinh^{n+1}( r_\Omega) \)^{\frac{2n}{n+1}}    \)^{\frac{1}{2}} -\omega_n \sinh^{n+1}( r_\Omega)\\
		=& \omega_n  e^{-r_\Omega} \sinh^n (r_\Omega) \\
		=& \tilde{h}_0 \circ \tilde{f}_0^{-1} (\Vol(\Omega)).
		\end{aligned}
		\end{equation*}
		Equality holds if and only if $\Omega$ is a geodesic ball centered at the origin. We complete the proof of Corollary \ref{cor-k=0 case of HLW}.
	\end{proof}
	
	\begin{proof}[Proof of Corollary \ref{cor-another app}]
		Since $f_0 = \tilde{f}_0$, we have $r_\Omega = \tilde{f}_0^{-1} \( \Vol(\Omega) \) = f_0^{-1}\( \Vol(\Omega) \)$.
		Using Corollary \ref{cor-weighted-iso-ineq} and Lemma \ref{lem-ineq-weighted vol-cano col}, we have
		\begin{equation*}
		\begin{aligned}
		\int_{\partial \Omega} \cosh r d\mu
		\geq&
		\( \( (n+1)\Vol_w(\Omega) \)^2+
		\omega_n^{ \frac{2}{n+1}} \( (n+1)\Vol_w(\Omega) \)^{\frac{2n}{n+1}}
		\)^{\frac{1}{2}}\\
		\geq&
		\(  \(\omega_n \sinh^{n+1} (r_\Omega) \)^2 + \omega_n^{\frac{2}{n+1} } \(\omega_n \sinh^{n+1}( r_\Omega) \)^{\frac{2n}{n+1}}    \)^{\frac{1}{2}}\\
		=&\omega_n \cosh\(r_\Omega\) \sinh^n \(r_\Omega\)\\
		=& h_0 \circ f_0^{-1}\(\Vol (\Omega)\).
		\end{aligned}
		\end{equation*}
		Equality holds if and only if $\Omega$ is a geodesic ball centered at the origin. We complete the proof of Corollary \ref{cor-another app}.
	\end{proof}
	
	\section{Weighted isoperimetric inequalities in warped product manifolds} \label{Sec-3}
	In Section \ref{Sec-3}, we give the proofs of Theorem \ref{thm-iso-warped} and Theorem \ref{thm-iso-ads-S}.
	
	Let $M^{n+1}= [a, b] \times \mathbb{S}^n$ be a warped product manifold with metric 
	\begin{equation*}
	\bar{g} = dr^2 + \lambda^2(r) g_{\mathbb{S}^n},
	\end{equation*}
	where $\lambda(a) \geq 0$, $\lambda' \geq 1$ on $[a,b]$, and $b$ could be infinity.
	Then the volume element of $M^{n+1}$ is $dv := \lambda^n dr d \sigma$, where  $d\sigma$ denotes the area element of the unit sphere $\mathbb{S}^n$.
	
	A star-shaped, $C^1$ hypersurface $(\Sigma,g) \subset (M^{n+1}, \bar{g})$ can be written as 
	\begin{equation*}
	\Sigma = \lbrace ( r(\theta), \theta ) \left\vert \theta \in \mathbb{S}^n  \right.\rbrace,
	\end{equation*}
	where $r(\theta)$ is a $C^1$ function on $\mathbb{S}^n$ and $a \leq r(\theta) \leq b$ for all $\theta \in \mathbb{S}^n$.
	We define $\Omega$ as the domain bounded by $\Sigma$ and the slice $\{a\} \times \mathbb{S}^n$ in $M^{n+1}$.
	For a function $f(\theta)$ defined on $\mathbb{S}^n$, we use $D f$ to denote the gradient of $f$.
	Let $V = \lambda(r) \partial_r$ be a conformal Killing vector field in $M^{n+1}$. The support function of $\Sigma$ is given by $u := \metric{V}{\nu}$, where $\nu$ is the unit outer normal of $\Sigma$. Let $\sigma_{ij}$ denote the standard metric on $\mathbb{S}^n$.
	Then we know that (see e.g. \cite{HL21})
	\begin{align}
	g_{ij} =& \lambda^2 \sigma_{ij} + D_i r D_j r, \nonumber\\
	u =& \frac{\lambda^2}{\sqrt{\lambda^2 + \abs{D r}^2}} \label{support function - radial graph}.
	\end{align}
	Hence we get 
	\begin{equation}\label{area-ele-radial graph}
	d\mu = \sqrt{\det \( g_{ij} \)} d\sigma = \lambda^{n-1} \sqrt{\lambda^2 + \abs{D r}^2} d\sigma
	= \frac{\lambda^{n+1}}{u} d\sigma.
	\end{equation}
	We remark that the above formulas are valid for star-shaped hypersurfaces in $\mathbb{R}^{n+1}$ by viewing $\lambda(r) =r$.
	
	In Section \ref{Sec-3}, we define $\pi: M^{n+1} \to \mathbb{R}^{n+1}$ by $\pi \( (r, \theta)\) =  \rho \theta$, where $\rho = \lambda(r)$.  We denote $\widehat{\Sigma} = \pi(\Sigma)$ and $\widehat{\Omega} = \pi (\Omega)$ as in Section \ref{Sec-2}. It is easy to see that \eqref{rel-weighted vol-Ecul vol} is valid in this situation.
	\begin{lem}\label{lem-rel-warped-area}
		Let $\Sigma$ be a star-shaped, $C^1$ hypersurface in $M^{n+1}$. Let $\phi$ be any continuous function on $\mathbb{R}$.
		Then 
		\begin{equation}\label{rel-warped-area}
		\int_\Sigma \phi(\lambda)\lambda' d\mu = \int_{\widehat{\Sigma}} \phi(\rho) \sqrt{\frac{\Psi^2(\rho)-1}{\rho^2} \widehat{u}^2+1}   d\widehat{\mu},
		\end{equation}
		where $\widehat{u}$ denotes the support function of $\widehat{\Sigma} \subset \mathbb{R}^{n+1}$ and $\Psi (t) := \lambda' \circ \lambda^{-1} (t)$.
	\end{lem}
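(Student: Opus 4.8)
The plan is to mirror the proof of Lemma~\ref{lem-rel-weighted area-integral in Eucl}, with the Euclidean normal formula \eqref{nu-uhat} replaced by a direct computation in the warped product metric. As in \eqref{loc-rel-weighted area-integral in Eucl}, I would first write the weighted area element as an interior product and transport it to $\widehat{\Sigma}$ through the diffeomorphism $\pi$. Since the pointwise identity $\phi(\lambda)\lambda' dv = \phi(\rho)\,d\widehat{v}$ holds here exactly as in \eqref{rel-weighted vol-Euc vol} (because $dv = \lambda^n dr\,d\sigma$ and $d\widehat{v} = \rho^n d\rho\,d\sigma$ with $\rho = \lambda(r)$), naturality of the interior product under $\pi$ gives, along $\widehat{\Sigma}$,
\begin{equation*}
\phi(\lambda)\lambda' d\mu = \nu \lrcorner \( \phi(\lambda)\lambda' dv \) = \( \pi_* \nu \) \lrcorner \( \phi(\rho)\,d\widehat{v} \) = \phi(\rho)\,\metric{\pi_*\nu}{\widehat{\nu}}\,d\widehat{\mu}.
\end{equation*}
Thus the whole statement reduces to the single scalar identity $\metric{\pi_*\nu}{\widehat{\nu}} = \sqrt{\frac{\Psi^2(\rho)-1}{\rho^2}\,\widehat{u}^2 + 1}$.

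To compute the left-hand side, I would first record the unit outer normal of the graph $r = r(\theta)$ in $M^{n+1}$. From $\bar g = dr^2 + \lambda^2 g_{\mathbb{S}^n}$ and the defining function $r - r(\theta)$ one gets
\begin{equation*}
\nu = \frac{\partial_r - \lambda^{-2}\sigma^{ij} D_j r\,\partial_{\theta^i}}{\sqrt{1 + \lambda^{-2}\abs{Dr}^2}},
\end{equation*}
which is consistent with \eqref{support function - radial graph}. Writing $\pi \( (r,\theta) \) = \rho\,\theta$ with $\rho = \lambda(r)$, the coordinate fields push forward to $\pi_*\partial_r = \lambda'\theta$ and $\pi_*\partial_{\theta^i} = \rho\,e_i$, where $\theta$ is the Euclidean unit radial vector and $e_i = \partial_{\theta^i}\theta$ are the coordinate fields of the unit sphere, so that $\metric{e_i}{e_j} = \sigma_{ij}$ and $\metric{\theta}{e_i} = 0$. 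Letting $Dr := \sigma^{ij}D_j r\,e_i$ and $D\rho$ denote the sphere gradients of $r(\theta)$ and $\rho(\theta) = \lambda(r(\theta))$ regarded as Euclidean vectors tangent to $\mathbb{S}^n$ at $\theta$ (so $D\rho = \lambda' Dr$ and $\metric{\theta}{Dr} = 0$), this yields
\begin{equation*}
\pi_*\nu = \frac{\lambda'\theta - \lambda^{-1}Dr}{\sqrt{1 + \lambda^{-2}\abs{Dr}^2}}, \qquad \widehat{\nu} = \frac{\lambda\,\theta - \lambda' Dr}{\sqrt{\lambda^2 + (\lambda')^2\abs{Dr}^2}},
\end{equation*}
the second being the Euclidean outer normal of the radial graph $\rho = \rho(\theta)$.

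Because $\metric{\theta}{Dr} = 0$ and $\metric{Dr}{Dr} = \abs{Dr}^2$, the inner product collapses to
\begin{equation*}
\metric{\pi_*\nu}{\widehat{\nu}} = \frac{\lambda'\lambda + \lambda^{-1}\lambda'\abs{Dr}^2}{\sqrt{1 + \lambda^{-2}\abs{Dr}^2}\,\sqrt{\lambda^2 + (\lambda')^2\abs{Dr}^2}} = \frac{\lambda'\sqrt{\lambda^2 + \abs{Dr}^2}}{\sqrt{\lambda^2 + (\lambda')^2\abs{Dr}^2}}.
\end{equation*}
Finally, using $\Psi(\rho) = \lambda'$, $\rho = \lambda$, and $\widehat{u} = \rho^2/\sqrt{\rho^2 + \abs{D\rho}^2} = \lambda^2/\sqrt{\lambda^2 + (\lambda')^2\abs{Dr}^2}$ from \eqref{support function - radial graph}, a short computation shows the radicand $\frac{\Psi^2(\rho)-1}{\rho^2}\widehat{u}^2 + 1$ equals $\frac{(\lambda')^2(\lambda^2 + \abs{Dr}^2)}{\lambda^2 + (\lambda')^2\abs{Dr}^2}$, which is exactly the square of the displayed quantity; this establishes the scalar identity, and integrating $\phi(\rho)\metric{\pi_*\nu}{\widehat{\nu}}\,d\widehat{\mu}$ over $\widehat{\Sigma}$ gives \eqref{rel-warped-area}. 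I expect the only real difficulty to be the bookkeeping in this last step: one must carefully separate the roles of $\lambda$ and $\lambda'$ and translate between $\abs{Dr}$ and $\abs{D\rho} = \lambda'\abs{Dr}$, since a single misplaced power of $\lambda$ would destroy the match with $\widehat{u}$ and hence the factor $\sqrt{(\Psi^2(\rho)-1)\widehat{u}^2/\rho^2 + 1}$.
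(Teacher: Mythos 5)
Your proof is correct, but it follows a genuinely different route from the paper's own proof of Lemma \ref{lem-rel-warped-area}. The paper never computes a normal vector in Section \ref{Sec-3}: it works entirely with support functions and the radial graph formulas. Concretely, it applies \eqref{support function - radial graph} to $\widehat{\Sigma}$ to get \eqref{supp-Sigmahat}, solves that relation for $\abs{Dr}^2$ in terms of $\widehat{u}$, substitutes back into \eqref{support function - radial graph} to obtain the identity $u = \lambda'\lambda\widehat{u}/\sqrt{\lambda^2+((\lambda')^2-1)\widehat{u}^2}$ (equation \eqref{rel-u-uhat}), and then compares the two area elements over the common spherical element $d\sigma$ via \eqref{area-ele-radial graph}, so the conversion factor is read off as $\lambda'\widehat{u}/u$. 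You instead transplant the Section \ref{Sec-2} mechanism (Lemmas \ref{lem-nu-uhat} and \ref{lem-rel-weighted area-integral in Eucl}) to the warped product: contract the weighted volume form with $\nu$, push it forward through $\pi$ by naturality of the interior product, and identify the conversion factor as $\metric{\pi_*\nu}{\widehat{\nu}}$, computed from explicit formulas for the graph normal $\nu$ in $(M^{n+1},\bar g)$, the pushforwards $\pi_*\partial_r=\lambda'\theta$ and $\pi_*\partial_{\theta^i}=\rho e_i$, and the Euclidean normal $\widehat{\nu}$. The two factors of course agree, $\lambda'\widehat{u}/u = \metric{\pi_*\nu}{\widehat{\nu}}$, and your intermediate computations (the normal formula, the orthogonality $\metric{\theta}{Dr}=0$, the inner product, and the matching of the radicand with $\frac{(\lambda')^2(\lambda^2+\abs{Dr}^2)}{\lambda^2+(\lambda')^2\abs{Dr}^2}$) all check out. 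What the paper's route buys is economy: it reuses the already-recorded formulas \eqref{support function - radial graph} and \eqref{area-ele-radial graph}, needs no pushforward bookkeeping, and produces the intermediate relation \eqref{rel-u-uhat} between $u$ and $\widehat{u}$ as a byproduct. What your route buys is uniformity and geometric transparency: it shows that the hyperbolic-space argument of Lemmas \ref{lem-nu-uhat}--\ref{lem-rel-weighted area-integral in Eucl} extends essentially verbatim once the Minkowski-space normal formula \eqref{nu-uhat} is replaced by the intrinsic graph normal, and it exhibits the Jacobian factor as the pairing of the pushed-forward normal with $\widehat{\nu}$ rather than as a ratio of support functions.
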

	\begin{proof}
		Since $\rho= \lambda(r)$, formula \eqref{support function - radial graph} yields that
		\begin{equation}\label{supp-Sigmahat}
		\widehat{u} = \frac{\rho^2}{\sqrt{\rho^2 + \abs{D \rho}^2}} = \frac{\lambda^2}{\sqrt{\lambda^2 + \(\lambda'\)^2 \abs{D r}^2}}, 
		\end{equation}	
		By a direct calculation, \eqref{supp-Sigmahat} deduces that
		\begin{equation*}
		\abs{D r}^2 = \frac{\lambda^2 \(\lambda^2 - \widehat{u}^2\)}{\(\lambda'\)^2 \widehat{u}^2}.
		\end{equation*}
		Inserting this formula into \eqref{support function - radial graph}, we obtain
		\begin{equation}\label{rel-u-uhat}
		u = \frac{\lambda^2}{ \sqrt{\lambda^2 + \frac{\lambda^2 (\lambda^2 - \widehat{u}^2)}{\(\lambda'\)^2 \widehat{u}^2}}}= \frac{\lambda' \lambda  \widehat{u}}{\sqrt{\lambda^2 +\( \(\lambda'\)^2-1 \) \widehat{u}^2}}.
		\end{equation}
		Now we focus on the relationship between area elements of $\Sigma$ and $\widehat{\Sigma}$. Formula \eqref{area-ele-radial graph} implies that
		\begin{equation}\label{area ele-Sigma-Sigmahat}
		d\mu = \frac{\lambda^{n+1}}{u} d\sigma, \quad d \widehat{\mu} = \frac{\rho^{n+1}}{\widehat{u}} d\sigma.
		\end{equation}
		Combining \eqref{area ele-Sigma-Sigmahat} with \eqref{rel-u-uhat}, we have
		\begin{equation*}
		\phi(\lambda)\lambda' d\mu = \frac{\phi(\lambda)\lambda' \lambda^{n+1} }{u} d\sigma
		= \frac{\phi(\rho)\lambda' \widehat{u}}{u} d\widehat{\mu}
		= \phi(\rho) \sqrt{\frac{\Psi^2(\rho)-1}{\rho^2} \widehat{u}^2+1} d\widehat{\mu}.
		\end{equation*}
		Then Lemma \ref{lem-rel-warped-area} follows by integrating both  sides of the above formula on $\Sigma$ and $\widehat{\Sigma}$ respectively.
	\end{proof}
	
	Now we use Theorem C to derive the following weighted isoperimetric inequalities for domains in an annulus in $\mathbb{R}^{n+1}$.
	\begin{lem}\label{lem-ThmC-modified}
		Let $\widehat{\Sigma} \subset \mathbb{R}^{n+1}$ be a star-shaped hypersurface in the domain bounded by the geodesic spheres $\{\lambda(a)\} \times \mathbb{S}^n$ and $\{\lambda(b)\} \times \mathbb{S}^n$. Let $\widehat{\Omega}$ be the domain bounded by $\widehat{\Sigma}$ and $\{\lambda(a)\} \times \mathbb{S}^n$.  Let $\phi(t)$ be a smooth, positive, even function on $[-\lambda(b), \lambda(b)]$ that satisfies $\(\log \phi\)'' \geq  0$. Then
		\begin{equation*}
		\int_{\widehat{\Sigma}} \phi(\rho) d\widehat{\mu} \geq \tilde{\xi} \( \int_{\widehat{\Omega}} \phi(\rho) d\widehat{v}\),
		\end{equation*}
		where $\rho$ denotes the radial function of $\widehat{\Sigma}$, and $\tilde{\xi}$ satisfies
		\begin{equation*}
		\tilde{\xi} \( \omega_n \int_{\lambda(a)}^t \phi(s) s^n ds\) = \omega_n \phi(t) t^n  , \quad \forall \ t \in [\lambda(a),\lambda(b)].
		\end{equation*}
	\end{lem}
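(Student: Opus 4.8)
The plan is to reduce this annular statement directly to Chambers's Theorem C by filling in the inner ball. Since $\widehat{\Sigma}$ is star-shaped and lies outside the inner sphere $\{\lambda(a)\}\times\mathbb{S}^n$, it is a radial graph $\rho=\rho(\theta)\geq \lambda(a)$, and the bounded domain it encloses is exactly $\widehat{\Omega}^{*} := \widehat{\Omega} \cup \{\rho \leq \lambda(a)\}$, which has $C^1$ boundary $\widehat{\Sigma}$. First I would record that $\widehat{\Omega}$ is contained in the annulus $\{\lambda(a)\leq \rho \leq \lambda(b)\}$, so every radius occurring below stays in $[0,\lambda(b)]$.

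Before invoking Theorem C, one technical point must be settled: Theorem C is stated for a density that is defined and log-convex on all of $\mathbb{R}$, whereas here $\phi$ is only given on $[-\lambda(b),\lambda(b)]$. I would extend $\phi$ to a smooth, positive, even, log-convex function on $\mathbb{R}$ by continuing $\log\phi$ affinely outside $[-\lambda(b),\lambda(b)]$; this alters none of the integrals below, since every point of $\widehat{\Omega}^{*}$ has $\rho\leq \lambda(b)$. Applying Theorem C to $\widehat{\Omega}^{*}$ then gives
\begin{equation*}
\int_{\widehat{\Sigma}} \phi(\rho)\, d\widehat{\mu} \geq \xi\left(\int_{\widehat{\Omega}^{*}} \phi(\rho)\, d\widehat{v}\right).
\end{equation*}

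It remains to match the two comparison functions $\xi$ and $\tilde{\xi}$. I would split the weighted volume of the filled domain over the annulus and the inner ball,
\begin{equation*}
\int_{\widehat{\Omega}^{*}} \phi(\rho)\, d\widehat{v} = \int_{\widehat{\Omega}} \phi(\rho)\, d\widehat{v} + \omega_n \int_0^{\lambda(a)} \phi(s) s^n\, ds.
\end{equation*}
Choosing $t_0 \in [\lambda(a),\lambda(b)]$ with $\int_{\widehat{\Omega}} \phi(\rho)\,d\widehat{v} = \omega_n \int_{\lambda(a)}^{t_0} \phi(s) s^n\, ds$ (possible, since the left side is at most the weighted volume of the full annulus, forcing $t_0\leq\lambda(b)$), the additivity $\int_0^{t_0} = \int_0^{\lambda(a)} + \int_{\lambda(a)}^{t_0}$ yields $\int_{\widehat{\Omega}^{*}}\phi(\rho)\,d\widehat{v} = \omega_n \int_0^{t_0}\phi(s)s^n\,ds$. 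By the defining relation \eqref{def-xi} for $\xi$ and the defining relation for $\tilde{\xi}$, both $\xi\left(\int_{\widehat{\Omega}^{*}}\phi(\rho)\,d\widehat{v}\right)$ and $\tilde{\xi}\left(\int_{\widehat{\Omega}}\phi(\rho)\,d\widehat{v}\right)$ equal $\omega_n \phi(t_0) t_0^n$, which completes the argument.

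Each step is short, so I do not expect the extension of $\phi$ or the $C^1$ regularity of $\widehat{\Omega}^{*}$ to cause genuine trouble: star-shapedness makes $\widehat{\Sigma}$ a radial graph beyond the inner sphere, and the extension is invisible to all the integrals. The only real subtlety is the bookkeeping that makes the two comparison functions agree, namely recognizing that adjoining the fixed inner ball is precisely what converts the normalization $\int_{\lambda(a)}^{t}$ built into $\tilde{\xi}$ into the normalization $\int_0^{t}$ built into $\xi$.
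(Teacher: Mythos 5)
Your proposal is correct and takes essentially the same route as the paper: fill in the inner ball so that Theorem C applies to the full domain bounded by $\widehat{\Sigma}$, then use the additivity of the weighted volume to see that adjoining the fixed ball converts the normalization $\int_{\lambda(a)}^{t}$ built into $\tilde{\xi}$ into the normalization $\int_0^{t}$ built into $\xi$. Your extra care in extending $\phi$ beyond $\lambda(b)$ addresses a point the paper passes over silently, though note that an affine continuation of $\log\phi$ is only $C^{1,1}$ at $\pm\lambda(b)$, so one should smooth the corner while preserving convexity (or invoke an approximation argument) to stay within the smooth hypotheses of Theorem C.
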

	
	\begin{proof}
		Theorem C implies that
		\begin{equation*}
		\begin{aligned}
		\int_{\widehat{\Sigma}}\phi(\rho) d\widehat{\mu} \geq& \xi \(\int_{\widehat{\Omega} \cup B( \lambda(a))  } \phi(\rho) d\widehat{v} \)\\
		=&\xi \(\int_{\widehat{\Omega}} \phi(\rho) d\widehat{v}+  \omega_n \int_0^{\lambda(a)} \phi(s)s^n ds    \)\\
		=&\tilde{\xi} \( \int_{\widehat{\Omega}} \phi(\rho) d\widehat{v}\),
		\end{aligned}
		\end{equation*}
		where we used \eqref{def-xi}.
		We complete the proof of Lemma \ref{lem-ThmC-modified}.
	\end{proof}
	Next, we prove the weighted isoperimetric inequalities in warped product manifolds.
	\begin{proof}[Proof of Theorem \ref{thm-iso-warped}]
		Jensen's inequality yields that
		\begin{equation}\label{Jensen-general}
		\int_{\widehat{\Sigma}} \phi(\rho) \sqrt{\(\Lambda(\rho) \widehat{u}\)^2+1} d\widehat{\mu}
		\geq \sqrt{ \( \int_{\widehat{\Sigma}} \phi(\rho) \Lambda(\rho) \widehat{u} d\widehat{\mu}\)^2 + \(\int_{\widehat{\Sigma}} \phi(\rho) d\widehat{\mu} \)^2 },
		\end{equation} 
		with equality if and only if $\Lambda(\rho) \widehat{u}$ is constant on $\widehat{\Sigma}$.
		By the divergence theorem, we have
		\begin{equation}\label{div-formula}
		\begin{aligned}
		&\int_{\widehat{\Sigma}} \phi(\rho) \Lambda(\rho) \widehat{u} d\widehat{\mu}
		- \omega_n \phi(\lambda(a)) \Lambda(\lambda(a)) \lambda^{n+1}(a)\\
		=& (n+1) \int_{\widehat{\Omega}} \phi(\rho) \Lambda(\rho) d\widehat{v} + \int_{\widehat{\Omega}} 
		\( \phi(\rho) \Lambda(\rho ) \)' \rho d\widehat{v} .
		\end{aligned}
		\end{equation}
		The assumptions of $\Lambda(t)$ and $\phi(t)$ imply that 
		\begin{equation*}
		\begin{aligned}
		\( \frac{\phi(\rho) \Lambda(\rho)}{\phi(\rho)} \)' \geq& 0,\\
		\(\frac{\( \phi(\rho) \Lambda(\rho ) \)' \rho}{\phi(\rho)}\)' 
		=& \(\log \phi\)'' \lambda \rho + \(\log \phi \)' \(\Lambda \rho \)' + \(\Lambda' \rho\)' 
		\geq 0.
		\end{aligned}
		\end{equation*}
		Then we can apply the symmetrization method as in Lemma \ref{lem-sym of phi} to get
		\begin{equation}\label{sym-general}
		\int_{\widehat{\Sigma}} \phi(\rho) \Lambda(\rho) \widehat{u} d\widehat{\mu}
		\geq  \tilde{\eta} \( \int_{\widehat{\Omega}} \phi(\rho) d\widehat{v} \),
		\end{equation}
		where $\tilde{\eta}$ is a single-variable, increasing function such that equality holds in \eqref{sym-general} when $\widehat{\Sigma} \subset \mathbb{R}^{n+1}$ is a geodesic sphere centered at the origin. Meanwhile, Lemma \ref{lem-ThmC-modified} asserts that
		\begin{equation}\label{iso-general}
		\int_{\widehat{\Sigma}} \phi(\rho) d \widehat{\mu} \geq \tilde{\xi} \( \int_{\widehat{\Omega}} \phi(\rho) d\widehat{v}\).
		\end{equation}
		Using \eqref{sym-general} and \eqref{iso-general} in \eqref{Jensen-general}, we have
		\begin{equation}\label{final result on R^n+1}
		\int_{\widehat{\Sigma}} \phi(\rho) \sqrt{\(\Lambda(\rho) \widehat{u}\)^2+1} d\widehat{\mu}
		\geq \tilde{\psi} \(  \int_{\widehat{\Omega}} \phi (\rho) d\widehat{v} \).
		\end{equation}
		If equality holds in \eqref{final result on R^n+1}, then $\Lambda(\rho) \widehat{u}$ is constant.  Let $\rho_{\max} = \max_{\theta \in \mathbb{S}^n}  \rho(\theta)$ and $\rho_{\min} = \min_{\theta \in \mathbb{S}^n}  \rho(\theta)$. Then \eqref{supp-Sigmahat} implies that $\Lambda (\rho) \widehat{u} = \Lambda(\rho_{\max}) \rho_{\max} =  \Lambda(\rho_{\min}) \rho_{\min}$. Since $\Lambda' \geq 0$, we know that $\rho$ is constant on $\widehat{\Sigma}$ in this case, which implies that $\widehat{\Sigma}$ is a geodesic sphere centered at the origin.  Finally, using \eqref{rel-weighted vol-Ecul vol} and \eqref{rel-warped-area} in \eqref{final result on R^n+1}, we obtain \eqref{weighted isope warped produc}. Besides, equality holds in \eqref{weighted isope warped produc} if and only if $\Sigma$ is a radial coordinate slice $\{r_0\} \times \mathbb{S}^n$. We finish the proof of Theorem \ref{thm-iso-warped}.
	\end{proof}
	
	Finally, we prove a weighted isoperimetric inequality in the anti-de Sitter-Schwarzschild manifold.
	\begin{proof}[Proof of Theorem \ref{thm-iso-ads-S}]
		Here we adapt the notations in the proof of Theorem \ref{thm-iso-warped} by viewing $\phi(\rho) =1$.
		By the property of $\lambda(r)$ in \eqref{warp-func-ads-S}, we have for all $r \geq a$,
		\begin{equation}\label{lambda-prop}
		\begin{aligned}
		\(\lambda'(r) \)^2-1 =& \lambda^2(r) \(1- m \lambda^{-n-1} (r)\) \geq 0, \\
		\lambda''(r) \lambda(r) - \( \lambda'(r)\)^2+1 =& \frac{m(n+1)}{2} \lambda^{1-n}(r)>0. 
		\end{aligned}
		\end{equation}
		Hence $\Lambda(t)$ is well defined and  $\Lambda (\lambda(a)) =0$. 
		Then we use $\rho = \lambda(r)$ and \eqref{lambda-prop} to get
		\begin{equation}\label{Lambda'(rho)}
		\begin{aligned}
		\Lambda' (\rho) =& \frac{d \Lambda (\lambda(r))}{dr} \frac{dr}{d\rho}\\
		=&\frac{d}{dr}\(\frac{\sqrt{\(\lambda' \)^2-1}}{\lambda} \) \frac{1}{\lambda'}\\
		=&\frac{\lambda'' \lambda- \( \lambda'\)^2 +1}{\lambda^2 \sqrt{\(\lambda'\)^2-1 } }  \\
		=& \frac{m(n+1)}{2} \frac{1}{\lambda^{n+1} \sqrt{\(\lambda'\)^2-1 }}\\
		=& \frac{m(n+1)}{2} \frac{1}{\rho^{n+2} \Lambda(\rho) } 
		\geq 0.  
		\end{aligned}
		\end{equation}
		Therefore, by a similar symmetrization argument as in Lemma \ref{lem-sym of phi}, we have
		\begin{equation}\label{sym-ads-s}
		\int_{\widehat{\Omega}} \Lambda(\rho) d\widehat{v} \geq 
		\widehat{\eta}\( \int_{\widehat{\Omega}} d\widehat{v}  \),
		\end{equation}
		where $\widehat{\eta}$ was defined in \eqref{def-eta-hat}.
		Equality holds in \eqref{sym-ads-s} if $\widehat{\Sigma}$ is a geodesic sphere of radius greater than $m^{\frac{1}{n+1}}$ centered at the origin in $\mathbb{R}^{n+1}$. 
		From \eqref{Lambda'(rho)}, we have
		\begin{equation}\label{integ-Lam'rho}
		\int_{\widehat{\Omega}} \Lambda'(\rho) \rho d\widehat{v} = \frac{m(n+1)}{2}\int_{\widehat{\Omega}} \frac{1}{\rho^{n+1}  \Lambda(\rho)} d\widehat{v}. 
		\end{equation}
		Inserting \eqref{sym-ads-s} and \eqref{integ-Lam'rho} into \eqref{div-formula}, we obtain
		\begin{equation}\label{term1 div ads-S}
		\int_{\widehat{\Sigma}} \Lambda(\rho) \widehat{u} d\widehat{\mu } 
		\geq  \(n+1\)\widehat{\eta} \( \int_{\widehat{\Omega}} d\widehat{v}  \)+ \frac{m(n+1)}{2}\int_{\widehat{\Omega}} \frac{1}{\rho^{n+1}  \Lambda(\rho)} d\widehat{v}.
		\end{equation}
		The classical isoperimetric inequality asserts
		\begin{equation}\label{class-iso-ads-S}
		\int_{\widehat{\Sigma}} d\widehat{\mu} \geq \omega_n^{\frac{1}{n+1}} \( (n+1) \int_{\widehat{\Omega}} d\widehat{v} +\omega_n m  \)^{\frac{n}{n+1}}.
		\end{equation}
		Here we remark that $\frac{\omega_n m}{n+1}$ is the volume of $B(m^{\frac{1}{n+1}}) \subset \mathbb{R}^{n+1}$. Using \eqref{term1 div ads-S} and \eqref{class-iso-ads-S} in \eqref{Jensen-general}, we know
		\begin{equation*}
		\begin{aligned}
		\(\int_{\widehat{\Sigma}} \sqrt{\(\Lambda(\rho) \widehat{u}\)^2+1 } d\widehat{\mu}\)^2
		\geq&
		\( \(n+1\)\widehat{\eta} \( \int_{\widehat{\Omega}} d\widehat{v}  \)+ \frac{m(n+1)}{2}\int_{\widehat{\Omega}} \frac{1}{\rho^{n+1}  \Lambda(\rho)} d\widehat{v}  \)^2\\
		&+  \( (n+1)\omega_n^{\frac{1}{n}} \int_{\widehat{\Omega}} d\widehat{v} +\omega_n^{\frac{n+1}{n}} m  \)^\frac{2n}{n+1}  .
		\end{aligned}
		\end{equation*}
		Then we can apply  \eqref{rel-warped-area} and \eqref{rel-weighted vol-Ecul vol} to get
		\begin{equation*}
		\begin{aligned}
		\(\int_\Sigma \lambda' d\mu\)^2 
		\geq& 
		\(\(n+1\)\widehat{\eta}\(\int_\Omega \lambda' dv  \)
		+ \frac{m(n+1)}{2}\int_{\Omega} \frac{\lambda'}{\lambda^n \sqrt{\( \lambda' \)^2-1}}dv\)^2 \\
		&+ \( (n+1) \omega_n^{\frac{1}{n}}  \int_\Omega \lambda' dv+ \omega_n^{\frac{n+1}{n}} m  \)^\frac{2n}{n+1}.
		\end{aligned}
		\end{equation*}
		Thus we proved \eqref{weighted isope adsS}.
		By \eqref{Lambda'(rho)} and the same argument as in the proof of Theorem \ref{thm-iso-warped}, we know that equality holds in \eqref{weighted isope adsS} if and only if $\Sigma$ is a radial coordinate slice $\{r_0\} \times \mathbb{S}^n$, where $r_0 \in [a, +\infty)$. We complete the proof of Theorem \ref{thm-iso-ads-S}.
	\end{proof}
	
	\begin{bibdiv}
		\begin{biblist}
	\bibliographystyle{amsplain}
		\bib{ACW18}{article}{
		author={Andrews, Ben},
		author={Chen, Xuzhong},
		author={Wei, Yong},
		title={Volume preserving flow and Alexandrov-Fenchel type inequalities in
			hyperbolic space},
		journal={J. Eur. Math. Soc. (JEMS)},
		volume={23},
		date={2021},
		number={7},
		pages={2467--2509},
		%	issn={1435-9855},
		%	review={\MR{4269419}},
		%	doi={10.4171/jems/1059},
	}
	\bib{BDKS18}{article}{
	author={Bongiovanni, Eliot},
	author={Diaz, Alejandro},
	author={Kakkar, Arjun},
	author={Sothanaphan, Nat},
	title={Isoperimetry in surfaces of revolution with density},
	journal={Missouri J. Math. Sci.},
	volume={30},
	date={2018},
	number={2},
	pages={150--165},
	%		issn={0899-6180},
	%		review={\MR{3884737}},
}
	\bib{BBCLT16}{article}{
	author={Boyer, Wyatt},
	author={Brown, Bryan},
	author={Chambers, Gregory R.},
	author={Loving, Alyssa},
	author={Tammen, Sarah},
	title={Isoperimetric regions in $\Bbb R^n$ with density $r^p$},
	journal={Anal. Geom. Metr. Spaces},
	volume={4},
	date={2016},
	number={1},
	pages={236--265},
	%		review={\MR{3550297}},
	%		doi={10.1515/agms-2016-0009},
}
\bib{BHW16}{article}{
	author={Brendle, Simon},
	author={Hung, Pei-Ken},
	author={Wang, Mu-Tao},
	title={A Minkowski inequality for hypersurfaces in the anti--de
		Sitter--Schwarzschild manifold},
	journal={Comm. Pure Appl. Math.},
	volume={69},
	date={2016},
	number={1},
	pages={124--144},
	%	issn={0010-3640},
	%	review={\MR{3433631}},
	%	doi={10.1002/cpa.21556},
}

\bib{Cha19}{article}{
	author={Chambers, Gregory R.},
	title={Proof of the log-convex density conjecture},
	journal={J. Eur. Math. Soc. (JEMS)},
	volume={21},
	date={2019},
	number={8},
	pages={2301--2332},
	%		issn={1435-9855},
	%		review={\MR{4035846}},
	%		doi={10.4171/jems/885},
}
\bib{GWW15}{article}{
	author={Ge, Yuxin},
	author={Wang, Guofang},
	author={Wu, Jie},
	title={The GBC mass for asymptotically hyperbolic manifolds},
	journal={Math. Z.},
	volume={281},
	date={2015},
	number={1-2},
	pages={257--297},
	%	issn={0025-5874},
	%	review={\MR{3384870}},
	%	doi={10.1007/s00209-015-1483-y},
}

\bib{GR20}{article}{
	author={Gir\~ao, Frederico},
	author={Rodrigues, Diego},
	title={Weighted geometric inequalities for hypersurfaces in sub-static
		manifolds},
	journal={Bull. Lond. Math. Soc.},
	volume={52},
	date={2020},
	number={1},
	pages={121--136},
	%	issn={0024-6093},
	%	review={\MR{4072037}},
	%	doi={10.1112/blms.12312},
}

\bib{HL21}{article}{
	author={Hu, Yingxiang},
	author={Li, Haizhong},
	title={Geometric inequalities for static convex domains in hyperbolic
		space},
	journal={Trans. Amer. Math. Soc.},
	volume={375},
	date={2022},
	number={8},
	pages={5587--5615},
%	issn={0002-9947},
%	review={\MR{4469230}},
%	doi={10.1090/tran/8628},
}

\bib{HLW20}{article}{
	author={Hu, Yingxiang},
	author={Li, Haizhong},
	author={Wei, Yong},
	title={Locally constrained curvature flows and geometric inequalities in
		hyperbolic space},
	journal={Math. Ann.},
	volume={382},
	date={2022},
	number={3-4},
	pages={1425--1474},
%	issn={0025-5831},
%	review={\MR{4403226}},
%	doi={10.1007/s00208-020-02076-4},
}

\bib{Kwo16}{article}{
	author={Kwong, Kwok-Kun},
	title={An extension of Hsiung-Minkowski formulas and some applications},
	journal={J. Geom. Anal.},
	volume={26},
	date={2016},
	number={1},
	pages={1--23},
	%	issn={1050-6926},
	%	review={\MR{3441501}},
	%	doi={10.1007/s12220-014-9536-8},
}

\bib{Lee03}{book}{
	author={Lee, John M.},
	title={Introduction to smooth manifolds},
	series={Graduate Texts in Mathematics},
	volume={218},
	publisher={Springer-Verlag, New York},
	date={2003},
	pages={xviii+628},
%	isbn={0-387-95495-3},
%	review={\MR{1930091}},
%	doi={10.1007/978-0-387-21752-9},
}
\bib{Mor05}{article}{
	author={Morgan, Frank},
	title={Manifolds with density},
	journal={Notices Amer. Math. Soc.},
	volume={52},
	date={2005},
	number={8},
	pages={853--858},
	%	issn={0002-9920},
	%	review={\MR{2161354}},
}

\bib{Mor16}{book}{
	author={Morgan, Frank},
	title={Geometric measure theory},
	edition={5},
	note={A beginner's guide;
		Illustrated by James F. Bredt},
	publisher={Elsevier/Academic Press, Amsterdam},
	date={2016},
	pages={viii+263},
	%	isbn={978-0-12-804489-6},
	%	review={\MR{3497381}},
}
\bib{Pet16}{book}{
	author={Petersen, Peter},
	title={Riemannian geometry},
	series={Graduate Texts in Mathematics},
	volume={171},
	edition={3},
	publisher={Springer, Cham},
	date={2016},
	pages={xviii+499},
	%	isbn={978-3-319-26652-7},
	%	isbn={978-3-319-26654-1},
	%	review={\MR{3469435}},
	%	doi={10.1007/978-3-319-26654-1},
}

\bib{RCBM08}{article}{
	author={Rosales, C\'esar},
	author={Ca\~nete, Antonio},
	author={Bayle, Vincent},
	author={Morgan, Frank},
	title={On the isoperimetric problem in Euclidean space with density},
	journal={Calc. Var. Partial Differential Equations},
	volume={31},
	date={2008},
	number={1},
	pages={27--46},
	%	issn={0944-2669},
	%	review={\MR{2342613}},
	%	doi={10.1007/s00526-007-0104-y},
}

\bib{SX19}{article}{
	author={Scheuer, Julian},
	author={Xia, Chao},
	title={Locally constrained inverse curvature flows},
	journal={Trans. Amer. Math. Soc.},
	volume={372},
	date={2019},
	number={10},
	pages={6771--6803},
	%	issn={0002-9947},
	%	review={\MR{4024538}},
	%	doi={10.1090/tran/7949},
}

\bib{Xia16}{article}{
	author={Xia, Chao},
	title={A Minkowski type inequality in space forms},
	journal={Calc. Var. Partial Differential Equations},
	volume={55},
	date={2016},
	number={4},
	pages={Art. 96, 8},
	%	issn={0944-2669},
	%	review={\MR{3523663}},
	%	doi={10.1007/s00526-016-1037-0},
}
	\end{biblist}
\end{bibdiv}
\end{document}